\title{On derived equivalences for categories of generalized intervals of a finite poset}
\author{Fr\'ed\'eric Chapoton,  Sefi Ladkani and Baptiste Rognerud}
\begin{document}
\maketitle

%
%
\theoremstyle{plain}
\newtheorem{theo}{Theorem}[section]
\newtheorem{prop}[theo]{Proposition}
\newtheorem{lemma}[theo]{Lemma}
\newtheorem{coro}[theo]{Corollary}
\newtheorem{conj}[theo]{Conjecture}
\newtheorem{question}[theo]{Question}
\newtheorem*{notations}{Notations}
\theoremstyle{definition}
\newtheorem{de}[theo]{Definition}
\theoremstyle{remark}
\newtheorem{ex}[theo]{Example}
\newtheorem{re}[theo]{Remark}
\newtheorem{res}[theo]{Remarks}

%
%

\renewcommand{\labelitemi}{$\bullet$}
\newcommand{\comu}{co\mu}
\newcommand{\mo}{\ensuremath{\hbox{mod}}}
\newcommand{\Mo}{\ensuremath{\hbox{Mod}}}
\newcommand{\decale}[1]{\raisebox{-2ex}{$#1$}}
\newcommand{\decaleb}[2]{\raisebox{#1}{$#2$}}
\newcommand\field {{k}}
\newcommand\Mod {\hbox{-}{\rm Mod}}
\newcommand\Hom {{\rm Hom}}
\newcommand\End {{\rm End}}
\newcommand\Out{{\rm Out}}
\newcommand\mb{\hbox{-}}
\newcommand\D{\mathcal{D}}
\newcommand\C{\mathcal{C}}
\newcommand\Rad{{\rm Rad}}
\newcommand\Top{{\rm Top}}
\newcommand\Int{\mathrm{Int}}
\newcommand\U{\mathrm{U}}
\newcommand\FYk{\mathcal{F}_{Y,k}}
\newcommand\FGk{\mathcal{F}_{\Gamma,k}}
\newcommand\Fyk{\mathcal{F}_{F(y),k}}
\newcommand\FUk{\mathcal{F}_{\mathrm{U}_x,k}}
\newcommand\Fintk{\mathcal{F}_{\mathrm{Int(X)},k}}
\newcommand\ev{\mathrm{ev}}
\newcommand\Si{\mathrm{S}}
  \def\commutatif{\ar@{}[rd]|{\circlearrowleft}}
\begin{abstract}
We study two constructions related to the intervals of finite posets. The first one is a poset. The second one is more complicated. Loosely speaking it can be seen as a poset with some extra zero-relations.  As main result, we show that these two constructions are equivalent at the level of derived categories. 
\end{abstract}

\par\noindent
{\it {\footnotesize A.M.S. subject classification: 05E10, 16G20}}
\section{Introduction}

Let $k$ be a commutative ring with unit. There are various equivalent definitions of the notion of representation of a poset over $k$. One can look at representations of the \emph{Hasse diagram} of the poset viewed as a quiver with the relations of total commutativity, or at modules over the so-called \emph{incidence algebra} of the poset. An alternative definition is to use a \emph{functor category}. To a poset $(Y,\leqslant)$ one can associate a finite category $\mathcal{C}_Y$ where the objects are the elements of $Y$ and there is a unique morphism between $y$ and $y'$ if and only if $y\leqslant y'$. It is well-known that the category of covariant functors from $\mathcal{C}_Y$ to the category of $k$-vector spaces is equivalent to the category of right modules over the incidence algebra of $Y$. Moreover, it is also classical that if $k\mathcal{C}_Y$ is the $k$-linearization of $\mathcal{C}_Y$, then the category of functors from $\mathcal{C}_Y$ to the category of all $k$-modules $k\Mod$ is equivalent to the category of \emph{$k$-linear functors} from $k\mathcal{C}_Y$ to $k\Mod$.
\begin{de}
Let $k$ be a commutative ring and $(Y,\leqslant )$ be a poset. The category of $k$-linear functors from $k\mathcal{C}_Y$ to $k\Mod$ is denoted $\mathcal{F}_{Y,k}$. \end{de}

Note that the study of these functor categories is very
different from another kind of ``representation of posets", that
was considered by Nazarova, Kleiner and others (See \cite{simson} for more details), which involves a
non-abelian subcategory of the category of modules over a
one-point extension of the Hasse diagram.

Since the category $k\Mod$ is abelian, the category $\FYk$ inherits an abelian structure. In particular, one can consider the bounded derived category $D^b(\FYk)$ of this abelian category. With a slight abuse of notation, we call it the derived category of the poset. Numerous invariants of the poset can be read inside the derived category such as its cardinality or its number of connected components. It is also the good setting for the study of the Coxeter transformation and the Coxeter polynomial of finite posets (See \cite{ladkani_derived_poset} for more details). If two finite posets share the same derived category, then they have the same Coxeter polynonial. Using a computer, it is then easy to find many examples of finite posets with the same Coxeter polynomial, and one can wonder if they also share the same derived category.

In this spirit, there is an interesting conjectural example in the theory of Tamari lattices. It is conjectured by the first author that the Tamari lattice is derived equivalent to the poset of Dyck paths (See \cite{chapoton_derived_tamari} for more details). In the same context, we propose another conjecture involving the derived category of the poset of Dyck paths (See Conjecture \ref{conj_dyck}).

There are various tools that can be used to check if two posets share the same derived category (see \cite{ladkani_derived_poset} or \cite{ladkani_universal} for some explicit constructions). Unfortunately, there are no (known) algorithm and it is most of the time difficult to build such derived equivalences.

One of the difficulties comes from the fact that the derived category of a finite poset may also be equivalent to the derived category of a ring with a-priori no relation with posets. For example the poset $1<2 < 3$ is derived equivalent to the quotient of the path algebra of the quiver $1\to 2 \to 3$ by the ideal generated by the path of length two.

In this article we will focus on the set of intervals of finite posets. We start with two possible definitions of categories of intervals of a poset. The first definition is a poset, denoted $\Gamma$ and viewed as a category. On the other hand, the second category, denoted by $k\Gamma_0$, is not the category of a finite poset. It is, in some sense, the category of a poset with some extra zero-relations. More formally, the category of $k$-linear representations of $k\Gamma_0$ is equivalent to the category of modules over an algebra which is a quotient of the incidence algebra of $\Gamma$ by zero-relations. The main result of the article is that the categories of $k$-linear representations of $\Gamma$ and $k\Gamma_0$ are derived equivalent. This result is obtained as a special case of a slightly more general construction, that we illustrate with a few examples.

This general construction takes as starting point a pair of posets
$X,Y$ and a morphism from $Y$ to the distributive lattice of lower
ideals in $X$. In fact, we think that it might be seen as a very special case of some derived equivalences obtained by Asashiba, which has considered the so-called Grothendieck construction in \cite{asashiba}. Our results are much more elementary and concrete, with a shorter proof and provide an explicit and simple tilting complex. One can hope to apply them in many combinatorial contexts.

The category of intervals $k\Gamma_0$ seems to be a good intermediate object when one wants to produce derived equivalences between finite posets. As applications, we prove that the Auslander algebra of a linear order $A_n$ is derived equivalent to the incidence algebra of the poset of intervals of $A_n$. Together with results of the second author, this proves that the \emph{rectangle} poset $A_{2n+1}\times A_{n}$ is derived equivalent to the \emph{triangle} poset of intervals of $A_{2n}$. Finally, we investigate the relations between the derived category of the poset of $(a,b)$-\emph{rational Dyck} paths and the poset of \emph{lattice paths} in the $(a,b)$-rectangle. 
\begin{notations}
If $\mathcal{A}$ is an abelian category, we denote by $D^b(\mathcal{A})$ its bounded derived category. We denote by $\mathrm{proj}(\mathcal{A})$ the full subcategory of $\mathcal{A}$ consisting of the finitely generated projective objects. If $\mathcal{B}$ is an additive category, we denote by $K^{b}(\mathcal{B})$ the homotopy category of bounded complexes of $\mathcal{B}$. 
\newline If $n\in\mathbb{N}$, we denote by $\overrightarrow{A_n}$ the set $\{1,\cdots, n\}$. Unless specified otherwise, we see it with the total order $1<2 <\cdots < n$.
\end{notations}
\section{Two categories of generalized intervals of a poset}
\subsection{Categories of intervals of a finite poset}
Let $k$ be a commutative ring with unit. Let $(X,\leqslant)$ be a finite poset. For $a,b\in X$, we set
\[ [a,b]:= \{z\in X; a\leqslant z \hbox{ and } z\leqslant b \}.\]
As usual, the set $[a,b]$ is called an interval of $X$. We let $\Int(X)$ be the set of intervals of $X$. It has a natural partial order defined by
\[ [a,b] \leqslant [c,d] \hbox{ if and only if } a\leqslant c \hbox{ and } b\leqslant d.\]
\begin{de}
The set $\Int(X)$ with this particular partial order is called the poset of intervals of $X$. 
\end{de}
\begin{re}
There is another natural partial order on the set $\Int(X)$ which is given by the inclusion of the intervals. However, with this partial order, the resulting endo-functor of the category of finite posets behaves less nicely. For example, it does not commutes with the duality. One can see that this poset is not equivalent to the poset that we consider in this article, even at the level of derived categories. 
\end{re}
If $[a,b]$ is an interval of $X$, then we have an indecomposable functor $M_{a,b}$ in $\mathcal{F}_{X,k}$ defined on the objects by:
\[ M_{a,b}(x) = \left\{\begin{array}{c}\ \ k \hbox{\ \ \ \ if $a\leqslant x\leqslant b$,}\\ 0\ \ \ \hbox{  otherwise.} \end{array}\right.\]
If $\alpha : x\to y$ is a morphism in $k\mathcal{C}_X$, then $M_{a,b}(\alpha) = \alpha$ if $a\leqslant x\leqslant y \leqslant b$ and $M_{a,b}(\alpha)=0$ otherwise. 
\begin{de}
Let $X$ be a finite poset and $k$ be a commutative ring. We let $\mathrm{Int}^{0}_k(X)$ be the category where the objects are the intervals of $X$ and, \[\Hom_{\mathrm{Int}^{0}_k(X)}([c,d],[a,b]):=\Hom_{\mathcal{F}_{X,k}}\big(M_{a,b},M_{c,d}\big).\]
\end{de}
Note that we applied an `op'-functor on each set of morphisms. 
\begin{lemma}
Let $X$ be a finite poset, let $k$ be a commutative ring. Then
\[ \Hom_{\mathcal{F}_{X,k}}(M_{a,b},M_{c,d}) = \left\{\begin{array}{c}k \hbox{ if $c\leqslant a\leqslant d\leqslant b$,} \\0 \hbox{ otherwise. }\end{array}\right.\]
\end{lemma}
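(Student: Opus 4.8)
The plan is to avoid checking naturality component by component and instead exploit the fact that $M_{a,b}$ is a cyclic object generated at $a$, hence a quotient of the representable (projective) functor $P_a := k\mathcal{C}_X(a,-)$, and then invoke the Yoneda lemma. Concretely, $P_a(x)=k$ when $a\leqslant x$ and $0$ otherwise, and the canonical surjection $P_a\twoheadrightarrow M_{a,b}$ has as kernel the subfunctor $K$ with $K(x)=k$ exactly when $a\leqslant x$ and $x\not\leqslant b$. The first step is to check that $K$ really is a subfunctor: if $a\leqslant x\leqslant y$ and $x\not\leqslant b$, then $y\not\leqslant b$ as well (otherwise $x\leqslant y\leqslant b$), so the structure maps of $P_a$ restrict to $K$. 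This realizes $M_{a,b}=P_a/K$.

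Next I would apply the $k$-linear Yoneda lemma to identify $\Hom_{\mathcal{F}_{X,k}}(P_a,M_{c,d})\cong M_{c,d}(a)$, where a morphism $\phi$ corresponds to $\mu:=\phi_a(1_a)\in M_{c,d}(a)$. Since $M_{c,d}(a)=k$ precisely when $c\leqslant a\leqslant d$ and is $0$ otherwise, this already forces the whole Hom-space to vanish unless $c\leqslant a\leqslant d$. Using the left exactness of $\Hom_{\mathcal{F}_{X,k}}(-,M_{c,d})$ applied to $K\hookrightarrow P_a\twoheadrightarrow M_{a,b}$, the module $\Hom_{\mathcal{F}_{X,k}}(M_{a,b},M_{c,d})$ is then the $k$-submodule of $M_{c,d}(a)$ consisting of those $\mu$ whose associated map kills $K$, i.e.\ for which $M_{c,d}(a\to x)(\mu)=0$ for every $x$ with $a\leqslant x$ and $x\not\leqslant b$.

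The decisive step is to evaluate this vanishing condition. Assuming $c\leqslant a\leqslant d$, the transition map $M_{c,d}(a\to x)$ is the identity whenever $a\leqslant x\leqslant d$ and is zero when $x\not\leqslant d$. Hence the only elements $x$ that can detect $\mu$ are those with $a\leqslant x\leqslant d$ and $x\not\leqslant b$; testing $x=d$ shows that if $d\not\leqslant b$ then $\mu$ is forced to be $0$, whereas if $d\leqslant b$ no such $x$ exists (any $x$ with $a\leqslant x\leqslant d\leqslant b$ satisfies $x\leqslant b$) and every $\mu$ survives. This yields $\Hom_{\mathcal{F}_{X,k}}(M_{a,b},M_{c,d})\cong M_{c,d}(a)\cong k$ exactly when $c\leqslant a\leqslant d\leqslant b$, and $0$ otherwise, over an arbitrary commutative ring $k$ (the Hom being free of rank one or zero).

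The point that needs the most care --- and the natural alternative route --- is the purely elementary computation with natural transformations: a transformation $\eta\colon M_{a,b}\to M_{c,d}$ is a family of scalars $(\lambda_x)$ supported on $[a,b]\cap[c,d]$, and the commuting squares force $\lambda$ to be constant on the comparability-connected components of this intersection, while also forcing $\lambda_x=0$ whenever some $y\geqslant x$ lies in $[c,d]\setminus[a,b]$, and dually $\lambda_y=0$ whenever some $x\leqslant y$ lies in $[a,b]\setminus[c,d]$. The main obstacle is the asymmetric bookkeeping of the order relations hidden in these last constraints: one must separate the condition coming from the \emph{lower} endpoints ($c\leqslant a\leqslant d$, ensuring the intersection is nonempty and contains $a$) from the genuinely extra condition coming from the \emph{upper} endpoints ($d\leqslant b$), and verify that together they cut the solution space down to a one-dimensional one rather than something larger or empty. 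Feeding the two extreme witnesses $x=a$ and $y=d$ into the constraints is exactly what makes both the direct and the Yoneda arguments collapse to the stated criterion.
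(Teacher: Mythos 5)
Your proof is correct, but it takes a genuinely different route from the paper. The paper argues directly with naturality squares: given a nonzero $\phi\colon M_{a,b}\to M_{c,d}$, it picks a point $x$ where $\phi_x\neq 0$, uses the square over $a\leqslant x$ to force $M_{c,d}(a)\neq 0$ (hence $c\leqslant a\leqslant d$) and the square over $x\leqslant d$ to force $M_{a,b}(d)\neq 0$ (hence $a\leqslant d\leqslant b$), then exhibits the identity-on-overlap transformation for the converse. You instead present $M_{a,b}$ homologically as $P_a/K$ with $P_a=\Hom_{k\mathcal{C}_X}(a,-)$ and $K$ the evident subfunctor supported on $\{x : a\leqslant x,\ x\not\leqslant b\}$, and then compute $\Hom_{\mathcal{F}_{X,k}}(M_{a,b},M_{c,d})$ as the kernel of $M_{c,d}(a)\to\Hom(K,M_{c,d})$ via Yoneda and left exactness of $\Hom(-,M_{c,d})$; your subfunctor check and the evaluation of the vanishing condition (testing $x=d$) are both sound. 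What your approach buys: it avoids case-by-case naturality checks, it immediately yields that the Hom-space is \emph{free} of rank one or zero over an arbitrary commutative ring $k$ (the paper's proof, as written, shows when a nonzero morphism exists but leaves implicit the one-dimensionality, which requires noting that $\phi$ is determined by $\phi_a$), and the same presentation would also compute higher $\mathrm{Ext}$ groups. What the paper's proof buys: it is shorter, entirely elementary, and it explicitly exhibits the generating morphism (identity on $[a,d]$), which is what makes the subsequent Corollary's claim that composition in $\mathrm{Int}^0_k(X)$ is scalar multiplication transparent; note that Yoneda also hands you this generator, as the transformation attached to $1\in M_{c,d}(a)$, so nothing is lost.
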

\begin{proof}
If there is a non-zero morphism $\phi$ between $M_{a,b}$ and $M_{c,d}$, then the intersection $[a,b]\cap [c,d]$ is non-empty. Let $x$ be an element of this intersection. Since $\phi$ is a natural transformation, the following diagram commutes:
\[ \xymatrix{
M_{a,b}(x) = k \ar[r]^{\phi_x} & M_{c,d}(x)=k \\
M_{a,b}(a) = k \ar[r]^{\phi_a}\ar[u] & M_{c,d}(a). \ar[u] 
}\]
This implies that $M_{c,d}(a)\neq 0$. So we have $c\leqslant a\leqslant d$. Similarly, the following diagram commutes:
\[ 
\xymatrix{
M_{a,b}(d)\ar[r]^{\phi_d} & M_{c,d}(d)=k \\
M_{a,b}(x)=k \ar[r]^{\phi_x}\ar[u]& M_{c,d}(x)=k.\ar[u]}
\]
So, $M_{a,b}(d)\neq 0$. This implies that $a\leqslant d \leqslant b$. Conversely, if $c\leqslant a\leqslant d\leqslant b$, then the morphism $\phi$ defined by $\phi_{z}=\operatorname{Id}_k$ for every $a\leqslant z\leqslant d$ is a natural transformation from $M_{a,b}$ to $M_{c,d}$. 
\end{proof}
In other terms, we have a combinatorial description of $\mathrm{Int}^{0}_k(X)$:
\begin{coro}
Let $X$ be a finite poset. Let $k$ be a commutative ring. Then $\mathrm{Int}^{0}_k(X)$ is the category where the objects are the intervals of $X$ and the morphisms are:
\[\Hom_{\mathrm{Int}^{0}_k(X)}([a,b],[c,d])= \left\{\begin{array}{c}k \hbox{ if $a\leqslant c\leqslant b\leqslant d$,} \\0 \hbox{ otherwise. }\end{array}\right.\]
The composition is given by scalar multiplication. 
\end{coro}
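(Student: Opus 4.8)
The plan is to deduce this corollary directly from the preceding Lemma together with the Definition of $\mathrm{Int}^{0}_k(X)$, the only subtlety being the `op' that was applied on the Hom-sets. By that Definition we have $\Hom_{\mathrm{Int}^{0}_k(X)}([c,d],[a,b]) = \Hom_{\mathcal{F}_{X,k}}(M_{a,b},M_{c,d})$; relabelling the two intervals, this reads equivalently $\Hom_{\mathrm{Int}^{0}_k(X)}([a,b],[c,d]) = \Hom_{\mathcal{F}_{X,k}}(M_{c,d},M_{a,b})$. So the entire content of the corollary is obtained by feeding this identity into the Lemma.

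First I would apply the Lemma to the right-hand side. The Lemma states that $\Hom_{\mathcal{F}_{X,k}}(M_{a,b},M_{c,d})$ is a free $k$-module of rank one precisely when $c\leqslant a\leqslant d\leqslant b$ and is zero otherwise. Interchanging the roles of the pairs $(a,b)$ and $(c,d)$, this says that $\Hom_{\mathcal{F}_{X,k}}(M_{c,d},M_{a,b})$ equals $k$ exactly when $a\leqslant c\leqslant b\leqslant d$ and vanishes otherwise. Combining with the displayed identity immediately yields the claimed description of $\Hom_{\mathrm{Int}^{0}_k(X)}([a,b],[c,d])$.

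Second, for the assertion about composition, I would unwind the explicit form of the natural transformations produced in the proof of the Lemma. A nonzero element of $\Hom_{\mathcal{F}_{X,k}}(M_{a,b},M_{c,d})$ is the natural transformation whose component at each $z$ in the common support $[a,b]\cap[c,d]$, on which both functors take the value $k$, is multiplication by a fixed scalar $\lambda\in k$, and which is zero elsewhere. Given a further morphism $M_{c,d}\to M_{e,f}$ attached to a scalar $\mu$, the composite in $\mathcal{F}_{X,k}$ is again of this shape, with scalar $\lambda\mu$ on the triple intersection. Passing to the opposite category only reverses the order in which the two scalars are multiplied, but since $k$ is commutative, the composition in $\mathrm{Int}^{0}_k(X)$ is still given by multiplying the scalars, as claimed. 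The main obstacle, such as it is, is purely bookkeeping: one must track the direction of the morphisms and the `op' of the Definition consistently, so that the inequality $c\leqslant a\leqslant d\leqslant b$ of the Lemma is transported without error into the inequality $a\leqslant c\leqslant b\leqslant d$ of the corollary. There is no substantive difficulty once the roles of the two intervals are fixed once and for all.
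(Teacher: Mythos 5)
Your proposal is correct and follows exactly the route the paper intends: the paper states this corollary without proof as an immediate reformulation of the preceding Lemma through the `op'-convention in the Definition of $\mathrm{Int}^{0}_k(X)$, which is precisely your relabelling argument. Your additional check that composition corresponds to multiplying scalars (with the composite automatically vanishing when the relevant triple intersection is empty) is a harmless elaboration of what the paper leaves implicit.
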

It is easy to see that the categories of $k$-linear representations of $k\mathrm{Int}(X)$ and $\mathrm{Int}^{0}_k(X)$ are not equivalent. This is already the case when $X = \overrightarrow{A_2}$. However, we will see that they share the same derived category. 
\subsection{Generalized intervals of a finite posets}
Let $(X,\leqslant)$ be a finite poset. For an element $x\in X$, we let $[.,x] = \{ x'\in X\ ; \ x'\leqslant x\}$. A subset $Z\subseteq X$ is \emph{closed} if $[.,x] \subseteq Z$ for every $x\in Z$. We denote by $\mathcal{J}(X)$ the poset of closed subsets of $X$ partially ordered by inclusion. Note that a closed subset of $X$ is also called an \emph{ideal} of $X$. 
\newline\indent Let $X$ and $Y$ be two finite posets. Let $F : Y \to \mathcal{J}(X)$ be an order-preserving map. In other words, for $y\in Y$ there is a closed subset $F(y)$ of $X$ such that $F(y)\subseteq F(y')$ whenever $y\leqslant y'$. Consider $\Gamma$ the poset defined by
\[ \Gamma = \bigsqcup_{y\in Y}\big(F(y)\times \{y\} \big) \subseteq X\times Y\]
with the partial order induced from that of the product $X\times Y$. In other terms, the elements of $\Gamma$ are pairs $(x,y)$ where $y\in Y$ and $x\in F(y)$, with
\[ (x,y)\leqslant (x',y') \Leftrightarrow x \leqslant x' \hbox{ and } y\leqslant y'. \]
The elements of $\Gamma$ are called \emph{generalized intervals} for the data $(X,Y,F)$.
\newline\indent Let us consider the $k$-linear category $k\Gamma_0$ where the objects are the pairs $(x,y)$ such that $y\in Y$ and $x\in F(y)$ and the morphisms are given by
\[\Hom_{k\Gamma_0}\big((x,y),(x',y')\big) =\left\{\begin{array}{c}k \hbox { if $x\leqslant x'$, $y\leqslant y'$ and $x'\in F(y)$}, \\0 \hbox{ otherwise.} \end{array}\right.  \] 
The composition is given by the scalar multiplication. 
\begin{de}
Let $X$ and $Y$ be two finite posets and $F : Y \to \mathcal{J}(X)$ be an order preserving map. Let $k$ be a commutative ring. Then, we denote by $\mathcal{F}_{\Gamma_0,k}$ the category of functors from $k\Gamma_0$ to $k\Mod$. 
\end{de}
\begin{re}
This setting is a generalization of the two previous constructions for the intervals of a given poset $X$. Indeed,
if $X=Y=Z$ and $F : X\to \mathcal{J}(X)$ is the map defined by $F(x)=[\cdot, x]$, then $\Gamma = \mathrm{Int}(X)$ and $k\Gamma_0 \cong \mathrm{Int}^0_k(X)$. 
\end{re}
\section{Main result and applications}
\subsection{Main Theorem}
\begin{theo}\label{main_thm}
Let $k$ be a commutative ring. Let $X$ and $Y$ be two finite posets and $F : Y \to \mathcal{J}(X)$ be an order preserving map. Then, there is a triangulated equivalence between $D^{b}\big(\mathcal{F}_{\Gamma,k}\big)$ and $D^{b}\big(\mathcal{F}_{\Gamma_0,k}\big)$. 
\end{theo}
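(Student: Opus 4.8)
The plan is to realize the equivalence by an explicit tilting complex together with Rickard's derived Morita theory. Translating to modules, write $A=k\Gamma$ for the incidence algebra of the poset $\Gamma$, so that $\FGk$ is the category of $A$-modules, and $A_0=k\Gamma_0$, so that $\mathcal{F}_{\Gamma_0,k}$ is the category of $A_0$-modules. The combinatorial description of the morphisms shows that the identity on objects defines a surjection $A\twoheadrightarrow A_0$ presenting $A_0$ as the quotient of $A$ by the ideal of zero-relations that kill a morphism $(x,y)\to(x',y')$ exactly when $x'\notin F(y)$. Restriction of scalars along this surjection is fully faithful on module categories; under it the indecomposable projective $A_0$-module attached to $(x,y)$ pulls back to the thin $A$-module $Q_{(x,y)}$ whose value is $k$ on $S_{(x,y)}=\{(x',y')\in\Gamma:(x,y)\leqslant(x',y'),\ x'\in F(y)\}$ and $0$ elsewhere, with identity structure maps. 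By Rickard's theorem it then suffices to produce $T=\bigoplus_{(x,y)\in\Gamma}T_{(x,y)}$ in $K^{b}(\mathrm{proj}\,A)$ that generates, satisfies $\Hom_{K^{b}}(T,T[n])=0$ for $n\neq 0$, and has $\End_{K^{b}}(T)\cong A_0$ (or $A_0^{\mathrm{op}}$, which is where the `op' in the definitions is absorbed).

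The combinatorial input that drives everything is that each $F(y)$ is an \emph{ideal} of $X$. This forces $S_{(x,y)}$ to be convex in $\Gamma$, and in fact to decompose as a product $S_{(x,y)}=\{x'\in F(y):x'\geqslant x\}\times\{y'\in Y:y'\geqslant y\}$ with least element $(x,y)$; dually the ``excess'' set $\{(x',y')\geqslant(x,y):x'\notin F(y)\}$ by which the principal up-set of $(x,y)$ exceeds $S_{(x,y)}$ is upward closed. Full faithfulness already guarantees that the degree-zero morphisms are correct, namely $\Hom_A(Q_{(x,y)},Q_{(x',y')})$ reproduces the morphism spaces of $\Gamma_0$; what it does \emph{not} give is the vanishing of higher extensions, and indeed the $Q_{(x,y)}$ themselves are not the right summands. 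Already for $X=Y=\overrightarrow{A_2}$, where $\Gamma=\Int(\overrightarrow{A_2})=\overrightarrow{A_3}$ and $A_0\cong k\overrightarrow{A_3}/\mathrm{rad}^2$, one computes $\mathrm{Ext}^1_A(Q_{(x,y)},Q_{(x',y')})\neq 0$ for a suitable pair, so $\bigoplus Q_{(x,y)}$ is not a tilting module; the correct $T_{(x,y)}$ are genuine complexes of the projectives $P_{(x,y)}$, built from the excess sets above and designed so that these higher extensions cancel while the degree-zero part is unchanged. I would compute the maps into the summands through the evaluation isomorphism $\Hom_{K^{b}}(P_{(u,v)},T_{(x,y)}[n])\cong H^{n}\big(T_{(x,y)}(u,v)\big)$ and the maps out of them through the dual evaluation, keeping every Hom-space free of rank at most one, a point that matters because $k$ is only assumed commutative and so no dimension count is available.

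The step I expect to be the real obstacle is precisely this orthogonality, $\Hom_{K^{b}}(T,T[n])=0$ for $n\neq 0$, together with the identification of $\End_{K^{b}}(T)$ with $A_0$: the two most natural guesses both fail in general, the thin modules $Q_{(x,y)}$ because of the self-extensions above, and the shifted simple modules $S_{(x,y)}[\mathrm{ht}]$ because that collapses the Ext-algebra to $A_0$ only when the incidence algebra of $\Gamma$ is Koszul (as happens for $\Gamma=\overrightarrow{A_3}$ but not for general $\Gamma$). The substance of the proof is therefore to pin down the complexes $T_{(x,y)}$ so that the interaction between the ideals $F(y)$ and the two-coordinate order annihilates all higher extensions. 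A cleaner and fully general route, which I would develop in parallel, is to induct on $|Y|$: adjoining a maximal element $y^{\ast}$ of $Y$ writes both $A$ and $A_0$ as triangular matrix algebras over the corresponding algebras for $Y\setminus\{y^{\ast}\}$, which are derived equivalent by induction, with connecting bimodules to the new fibre $F(y^{\ast})$ that correspond to one another under that equivalence; the triangular-matrix derived-equivalence machinery then applies and extends the tilting complex by a two-term ``reflection'' over the new vertex. This last formulation is the elementary incarnation, in the present setting, of the gluing behind Asashiba's Grothendieck-construction equivalences.
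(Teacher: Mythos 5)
Your proposal sets up the correct framework (Rickard's theorem applied to the incidence algebra $A=k\Gamma$ and the quotient $A_0=k\Gamma_0$), and your diagnosis of the naive candidate is right: the pullbacks $Q_{(x,y)}$ of the projective $A_0$-modules do have higher extensions (your $\overrightarrow{A_3}$ computation is correct), so $\bigoplus Q_{(x,y)}$ is not a tilting module. But at the decisive point the argument stops: you write that ``the substance of the proof is therefore to pin down the complexes $T_{(x,y)}$'' and you never pin them down. No concrete $T$ is defined, so neither the orthogonality $\Hom(T,T[n])=0$ for $n\neq 0$, nor generation, nor $\End(T)\cong A_0$ is ever established. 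Worse, the one hint you give --- complexes ``built from the excess sets'' --- naturally produces the projective resolutions of the very modules $Q_{(x,y)}$ that you yourself showed cannot work (in your $\overrightarrow{A_3}$ example the excess set of $(1,1)$ is $\{(2,2)\}$, giving the two-term resolution of $Q_{(1,1)}$, which has the nonvanishing $\mathrm{Ext}^1$ you computed). The fallback route by induction on $|Y|$ has the same status: for triangular matrix algebras a derived equivalence of the diagonal parts extends only when the connecting bimodules correspond under that equivalence, and this correspondence --- which is exactly where the content of the theorem sits --- is asserted rather than verified. So what you have is a correct strategy discussion with the central construction missing.

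For comparison, the paper's tilting object is not supported on up-sets of $\Gamma$ at all. For each $y\in Y$ and $x\in F(y)$ it takes the coinduced module $(i_y)_{\star}P_x$, where $i_y^{-1}$ is restriction to the fibre $F(y)\times\{y\}$ and $(i_y)_{\star}$ is its (exact) right adjoint; concretely $(i_y)_{\star}P_x$ is the thin module supported on $\{(a,b)\in\Gamma \ : \ x\leqslant a,\ b\leqslant y\}$, going \emph{up} in the $X$-direction but \emph{down} in the $Y$-direction. The key computation is that $i_{y'}^{-1}\circ(i_y)_{\star}(P_x)$ is $P_x$ if $y'\leqslant y$ and $x\in F(y')$, and $0$ otherwise: each summand restricts to every fibre as a projective or zero. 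Everything then becomes formal: orthogonality follows from the adjunction $\Hom\big((i_y)_{\star}P_x,(i_{y'})_{\star}P_{x'}[n]\big)\cong\Hom\big(i_{y'}^{-1}(i_y)_{\star}P_x,P_{x'}[n]\big)$, whose right-hand side vanishes for $n\neq 0$ because both arguments are projective; generation follows from $(i_y)_{\star}I_x\cong I_{(x,y)}$ together with exactness of $(i_y)_{\star}$; and the same adjunction computes $\End(T)$ and identifies it with $A_0$ up to taking opposites. In your own test case the paper's summands are the simple at $(1,1)$, the projective at $(1,1)$, and the projective at $(2,2)$ --- genuinely different objects from the $Q$'s or their resolutions, which is the idea your proposal is missing.
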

\begin{re}
We postpone the proof until Sections $4$ and $5$. 
\end{re}
Let us give an equivalent formulation of $\Gamma$ and the category $k\Gamma_0$ which is easier to manipulate. Let $X$, $Y$ and $Z$ be three finite posets with order preserving maps $f : X \to Z$ and $g : Y \to Z$. Consider the poset
\[\Gamma = \{ (x,y) \in Z\ ;\ f(x)\leqslant_{Z} g(y)\} \]
with partial order induced from $X\times Y$. 
\newline\indent Let $k\Gamma_0$ the category where the objects are the elements of $\Gamma$ and the morphisms are given by
\[\Hom_{k\Gamma_0}\big((x,y),(x',y')\big) =\left\{\begin{array}{c}k \hbox { if $x\leqslant_X x'$, $y\leqslant_Y y'$ and $f(x')\leqslant_Z g(y)$}, \\0 \hbox{ otherwise.} \end{array}\right.  \] 
Let $X$ and $Y$ be two finite posets and $F : Y\to \mathcal{J}(X)$ be an order preserving map. We set $Z= \mathcal{J}(X)$, the map $f : X\to J(X)$ is defined by $f(x)= [\cdot,x]$ and $g=F$. Then, the condition on a pair $(x,y)\in X\times Y$ that $f(x)\leqslant_{Z} g(y)$ means that $[\cdot,x] \subseteq F(y)$. Since $F(y)$ is closed, this is equivalent to the condition $x\in F(y)$. 
\newline Conversely, let $X$, $Y$ and $Z$ be three finite posets. Let $f : X \to Z$ and $g : Y\to Z$ be two order preserving maps. Then, for $y\in Y$, we let $F(y) = \{ x\in X\ ;\ f(x)\leqslant_Z g(y) \}$. Since $f$ is order preserving, the set $F(y)$ is closed and since $g$ is order preserving, we have $F(y)\subseteq F(y')$ whenever $y\leqslant_Y y'$. The condition on a pair $(x,y)\in X\times Y$ that $f(x)\leqslant_Z g(y)$ is equivalent to the condition that $x\in F(y)$. 
\begin{ex}
As first application, we consider some simple cases. 
\begin{enumerate}
\item Let $X = \{1,2,3\}$ such that $1< 3$ and $2< 3$. Let $Y = \{a,b,c,d\}$ such that $a < b < c < d$. Let $Z = \{i,j,k\}$ such that $i< j < k$. The morphism $f$ is defined by $f(1)= i$, $f(2)=j$ and $f(3)=k$. The morphism $g$ is defined by $g(a)=i$, $g(b)=j$, $g(c)=g(d)=k$. Then the Hasse diagram of $\Gamma$ is
\[ 
\xymatrix{
(1,a) \ar[r] & (1,b) \ar[r] & (1,c) \ar[r]\ar[d]\commutatif & (1,d)\ar[d] \\
& & (3,c) \ar[r]\commutatif & (3,d) \\
& (2,b) \ar[r] &(2,c) \ar[r]\ar[u] &(2,d) \ar[u]
}
\]
For $k\Gamma_0$ we have the following presentation with generators and relations of the category
\[ 
\xymatrix{
(1,a) \ar[r] & (1,b) \ar[r]\ar@{..>}[rd]_{0} & (1,c) \ar[r]\ar[d]\commutatif & (1,d)\ar[d] \\
& & (3,c) \ar[r]\commutatif & (3,d) \\
& (2,b) \ar[r]\ar@{..>}[ru]^{0} &(2,c) \ar[r]\ar[u] &(2,d) \ar[u]
}
\]
where the dotted arrows are zero relations. 
\item It is particularly interesting to consider the more symmetric case where $Y=Z$ and $g= Id_{Y}$. Then 
\[ \Gamma = \{ (x,y)\in X\times Y\ ;\ f(x)\leqslant_{Y} y \}\]
and the category $k\Gamma_0$ has morphisms:
\[\Hom_{k\Gamma_0}\big((x,y),(x',y')\big) =\left\{\begin{array}{c}k \hbox { if $x\leqslant_X x'$, $y\leqslant_Y y'$ and $f(x')\leqslant_Y y$}, \\0 \hbox{ otherwise.} \end{array}\right.  \] 
Let $P = \{1,2,3\}$ where $1<3$ and $2<3$. Let $X$ be the poset of $2$-chains of $P$ and $Y$ be the poset of $3$-chains of $P$. We let $f : X\to Y$ to be the morphism that sends a chain $i\leqslant j$ to $i\leqslant i \leqslant j$. Then, the Hasse diagram of $\Gamma$ is
\[ 
\xymatrix{
\bullet\ar[r]\commutatif & \bullet \ar[r] & \bullet & \bullet\ar[l]\commutatif & \bullet \ar[l] \\
\bullet\ar[r]\commutatif\ar[u] & \bullet\ar[u] & & \bullet\ar[u] \commutatif& \bullet\ar[u]\ar[l]\\
\bullet\ar[r]\ar[u] & \bullet\ar[u] & & \bullet\ar[u] & \bullet\ar[u]\ar[l]\\
\bullet\ar[u] & & & & \bullet\ar[u]}
\]
For $k\Gamma_0$ we have a presentation by generators and relations
\[ 
\xymatrix{
\bullet\ar[r]\commutatif & \bullet \ar[r] & \bullet & \bullet\ar[l]\commutatif & \bullet \ar[l] \\
\bullet\ar[r]\ar[u]\commutatif & \bullet\ar[u]\ar@{..>}[ur]_{0} & & \bullet\ar[u]\ar@{..>}[ul]^{0} \commutatif& \bullet\ar[u]\ar[l]\\
\bullet\ar[r]\ar[u] & \bullet\ar[u] & & \bullet\ar[u] & \bullet\ar[u]\ar[l]\\
\bullet\ar[u]\ar@{..>}[ur]_{0} & & & & \bullet\ar[u]\ar@{..>}[ul]^{0}
}
\]
More generally, if $l \in \mathbb{N}$, one can defined simplicial morphisms between the poset of $l$-chains of $P$ and the poset of $l+1$-chains of $P$, by duplicating or forgetting the element at a fixe position of the chains. This will give similar diagrams. 
\end{enumerate}
\end{ex}
\subsection{Special case of intervals}
For the specific case of the intervals of a finite poset, we have
\begin{coro}
Let $X$ be a finite poset. Let $k$ be a commutative ring. Then, the category $\Fintk$ is derived equivalent to the category $\mathcal{F}_{\mathrm{Int}_k^{0}(X),k}$. 
\end{coro}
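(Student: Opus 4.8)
The plan is to read this corollary as the special case of Theorem \ref{main_thm} already singled out in the remark preceding Section 3, so the whole argument amounts to choosing the data $(X,Y,F)$ correctly and then matching the resulting objects against $\mathrm{Int}(X)$ and $\mathrm{Int}^0_k(X)$. Concretely, I would keep $X$ as given, take $Y=X$, and let $F:X\to\mathcal{J}(X)$ be the map $F(x)=[\cdot,x]$ sending each element to its principal lower ideal. Order-preservation of $F$ is immediate, since $x\leqslant x'$ forces $[\cdot,x]\subseteq[\cdot,x']$, so the hypotheses of Theorem \ref{main_thm} are satisfied. Everything else is bookkeeping, carried out through the bijection $(a,b)\leftrightarrow[a,b]$ between pairs and intervals.

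First I would identify the poset $\Gamma$ with $\mathrm{Int}(X)$. By definition $\Gamma=\{(x,y)\in X\times X\ ;\ x\in F(y)\}=\{(x,y)\ ;\ x\leqslant y\}$, which under the bijection above is exactly the underlying set of $\mathrm{Int}(X)$. The induced product order $(x,y)\leqslant(x',y')\Leftrightarrow x\leqslant x'\text{ and }y\leqslant y'$ coincides with the order $[a,b]\leqslant[c,d]\Leftrightarrow a\leqslant c\text{ and }b\leqslant d$ defining $\mathrm{Int}(X)$. Hence $\Gamma\cong\mathrm{Int}(X)$ as posets, and consequently the functor categories agree: $\mathcal{F}_{\Gamma,k}\cong\Fintk$.

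Next I would identify the $k$-linear category $k\Gamma_0$ with $\mathrm{Int}^0_k(X)$. Substituting $F(y)=[\cdot,y]$ into the defining Hom-condition ``$x\leqslant x'$, $y\leqslant y'$ and $x'\in F(y)$'' turns it into the chain condition $x\leqslant x'\leqslant y\leqslant y'$, which under the same identification is precisely the condition $a\leqslant c\leqslant b\leqslant d$ appearing in the Corollary that describes $\mathrm{Int}^0_k(X)$; moreover both categories have composition given by scalar multiplication, so these rules match as well. Thus $k\Gamma_0\cong\mathrm{Int}^0_k(X)$ and $\mathcal{F}_{\Gamma_0,k}\cong\mathcal{F}_{\mathrm{Int}^0_k(X),k}$. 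Feeding the two identifications into Theorem \ref{main_thm} then produces a triangulated equivalence $D^b(\mathcal{F}_{\Gamma,k})\simeq D^b(\mathcal{F}_{\Gamma_0,k})$, which transports to the desired derived equivalence between $\Fintk$ and $\mathcal{F}_{\mathrm{Int}^0_k(X),k}$. Since the corollary is a direct specialization, there is no genuine obstacle; the only point requiring care is verifying that the product order and the zero-relations built into the Hom-formula of $k\Gamma_0$ line up exactly with the combinatorics of $\mathrm{Int}^0_k(X)$, and this is already guaranteed by the Lemma and its Corollary established above.
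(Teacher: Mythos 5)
Your proposal is correct and matches the paper's own route exactly: the paper treats this corollary as the special case $Y=X$, $F(x)=[\cdot,x]$ of Theorem \ref{main_thm}, with the identifications $\Gamma=\mathrm{Int}(X)$ and $k\Gamma_0\cong\mathrm{Int}^0_k(X)$ already recorded in the remark at the end of Section 2. Your write-up simply makes explicit the bookkeeping (the bijection $(x,y)\leftrightarrow[x,y]$ and the translation of the Hom-condition into the chain $x\leqslant x'\leqslant y\leqslant y'$) that the paper leaves implicit.
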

It was shown by the second author that the poset $A_{2n+1} \times A_n$ is derived equivalent to the stable Auslander algebra of the quiver $\overrightarrow{A_{2n+1}}$. The poset $A_{2n+1} \times A_n$ can be viewed as a \emph{rectangle}, and the stable Auslander algebra of the quiver $\overrightarrow{A_{2n+1}}$ with linear order can be seen as a \emph{triangle}. However, there are some zero-relations in the Auslander Reiten quiver that come from the almost split sequences where the left and the right terms are two simple modules. Using Theorem \ref{main_thm}, we can remove these zero-relations. 
\begin{coro}\label{triangle}
Let $n\in \mathbb{N}$ and $k$ be an algebraically closed field. Then, the poset $A_{2n+1} \times A_n$ is derived equivalent to the poset $\mathrm{Int}(\overrightarrow{A_{2n}})$.
\end{coro}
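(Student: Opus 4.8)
The plan is to chain together two derived equivalences. The first is the one provided by the second author's cited result, and the second is the new Corollary for intervals obtained above from Theorem \ref{main_thm}. Concretely, I would proceed as follows.

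First I would recall precisely what the second author proved: the poset $A_{2n+1}\times A_n$ (the rectangle) is derived equivalent to the stable Auslander algebra of $\overrightarrow{A_{2n+1}}$. The key observation is that the Auslander--Reiten quiver of $\overrightarrow{A_{2n+1}}$ is the triangular poset $\mathrm{Int}(\overrightarrow{A_{2n+1}})$ of intervals of the linear order, but the Auslander algebra carries extra zero-relations coming from the almost split sequences whose end terms are both simple modules. In the combinatorial language of this paper, these zero-relations are exactly the relations distinguishing $\mathrm{Int}^0_k(X)$ from the genuine interval poset $k\mathrm{Int}(X)$ when $X=\overrightarrow{A_{2n}}$: the incidence algebra of $\mathrm{Int}^0_k(\overrightarrow{A_{2n}})$ is a quotient of the incidence algebra of the interval poset by precisely these zero-relations. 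Thus the first step is to identify the stable Auslander algebra of $\overrightarrow{A_{2n+1}}$ with the algebra of $\mathrm{Int}^0_k(\overrightarrow{A_{2n}})$, giving a derived equivalence between $A_{2n+1}\times A_n$ and $\mathcal{F}_{\mathrm{Int}^0_k(\overrightarrow{A_{2n}}),k}$.

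The second step is immediate from the preceding Corollary applied to $X=\overrightarrow{A_{2n}}$: the category $\mathcal{F}_{\mathrm{Int}(\overrightarrow{A_{2n}}),k}$ is derived equivalent to $\mathcal{F}_{\mathrm{Int}^0_k(\overrightarrow{A_{2n}}),k}$. Composing the two equivalences yields a triangulated equivalence between the derived category of the rectangle $A_{2n+1}\times A_n$ and that of the interval poset $\mathrm{Int}(\overrightarrow{A_{2n}})$, which is the desired statement. Since $k$ is an algebraically closed field, the incidence algebra of each poset coincides with the path algebra of its Hasse quiver modulo the commutativity relations, so the functor-category and module-theoretic pictures agree and the identification of the stable Auslander algebra is unambiguous.

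The main obstacle I expect is the bookkeeping in the first step: one must verify carefully that the Auslander--Reiten translate and the pattern of almost split sequences for $\overrightarrow{A_{2n+1}}$ line up with the combinatorics of intervals of $\overrightarrow{A_{2n}}$, matching the shift in index ($2n+1$ versus $2n$), and that the zero-relations from simple-to-simple almost split sequences are exactly the relations $a\leqslant c\leqslant b\leqslant d$ failing to factor, i.e.\ those that pass from $k\mathrm{Int}(X)$ to $\mathrm{Int}^0_k(X)$. Once this identification is established, the rest is a formal composition of equivalences, so the content of the proof is entirely in recognizing the stable Auslander algebra as the zero-relation interval category $\mathrm{Int}^0_k(\overrightarrow{A_{2n}})$.
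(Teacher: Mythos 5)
Your proposal is correct and follows essentially the same route as the paper: cite the second author's result identifying $A_{2n+1}\times A_n$ with the stable Auslander algebra of $\overrightarrow{A_{2n+1}}$, identify that algebra with the zero-relation interval category $\mathrm{Int}^0_k(\overrightarrow{A_{2n}})$ by matching mesh relations (the paper does this via the intermediate observation that the stable Auslander algebra of $\overrightarrow{A_{2n+1}}$ is the ordinary Auslander algebra of $\overrightarrow{A_{2n}}$, then an explicit functor sending $[i,j]$ to the indecomposable with that support), and conclude by the interval case of Theorem \ref{main_thm}. The ``bookkeeping'' you flag as the main obstacle is exactly the content of the paper's explicit equivalence between $\mathrm{Int}^0_k(\overrightarrow{A_m})$ and the Auslander--Reiten quiver category, so your outline matches the published argument.
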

\begin{proof}
By Corollary $1.12$ of \cite{ladkani_rectangles}, the poset $A_{2n+1}\times A_n$ is derived equivalent to the stable Auslander algebra of $k\overrightarrow{A_{2n+1}}$. Because we consider a linear order on $A_{2n+1}$, it is easy to see that the stable Auslander algebra of $k\overrightarrow{A_{2n+1}}$ is isomorphic to the usual Auslander algebra of $k\overrightarrow{A_{2n}}$. Now, for $m\in \mathbb{N}^{*}$, we consider the Auslander algebra of $A_m$ with ordering $m < m-1 < \cdots < 1$. Let $Q$ be the Auslander Reiten quiver of $kA_m$ viewed as a category. Let $I$ be the category $\mathrm{Int}_{k}^{0}(\overrightarrow{A_m})$. There is a functor from $I$ to $Q$ which can be described as follows. The interval $[i,j]$ is sent to the indecomposable $kA_m$ module with support $[i,j]$, denoted $M_{[i,j]}$. If $[i,j] \leqslant [k,l]$, then the corresponding basis element is sent to the irreducible morphism between the indecomposable modules $M_{[i,j]}$ and $M_{[k,l]}$. 
\newline\indent If $i\neq j$, then the equality of the morphisms $[i,j] \to [i+1,j] \to [i+1,j+1]$ and $[i,j] \to [i,j+1] \to [i+1,j+1]$ corresponds via $\phi$ to the mesh relation 
{\small \[ 
\xymatrix{ & M_{[i+1,j]}\ar[rd] &  \\ M_{[i,j]}\ar[ru]\ar[rd] & & M_{[i+1,j+1]}=\tau^{-1}(M_{[i,j]}) \\ & M_{[i,j+1]}\ar[ru]
}
\]}
and the zero-relation $[i,i] \to [i,i+1] \to [i+1,i+1]$ in $\mathrm{Int}_k^{0}(A_k)$ corresponds to the mesh relation
{\small \[ \xymatrix{ 0 \ar[r] & M_{[i,i]} \ar[r] & M_{[i,i+1]} \ar[r] & M_{[i+1,i+1]} = \tau^{-1}(M_{[i,i]})\ar[r] & 0.}\]}
It is now easy to see that this functor is an equivalence of categories. In particular, the category of modules over the Auslander algebra of $A_m$ is equivalent to the category of $k$-linear functors from $\mathrm{Int}_k^{0}(\overrightarrow{A_m})$ to $k\Mod$. 
\newline\indent In conclusion, the poset $A_{2n+1} \times A_{n}$ is derived equivalent to $\mathrm{Int}_{k}^{0}(\overrightarrow{A_{2n}})$. The result follows from Theorem \ref{main_thm}. 
\end{proof}
\begin{re}
It is well-known that any two different orientations of a Dynkin diagram of type $A$ are derived equivalent. The stable Auslander algebra of two different orientations are also derived equivalent (see Section $1.5$ of \cite{ladkani_rectangles}). This implies that the poset of intervals $\mathrm{Int}(A_{2n})$ of a linear orientation of $A_{2n}$ is derived equivalent to the stable Auslander algebra of $A_{2n+1}$ for any orientation. However, It is wrong that two different orientations of $A_{2n}$ lead to derived equivalent posets of intervals. 
\end{re}
\subsection{Application to the poset of rational Dyck paths}
Let $a$ and $b$ be two co-prime integers. A rational $(a,b)$-Dyck path is a lattice path in an $(a\times b)$-rectangle that stays above and never crosses the diagonal. We denote by $\mathrm{Dyck}_{a,b}$ the set of rational Dyck paths. It is well known that there are $\frac{1}{a+b} {{a+b}\choose{b}}$ elements in $\mathrm{Dyck}_{a,b}$ (see \cite{bizley} for more details). The usual proof of this formula is to consider the set of all lattice paths in the rectangle $a\times b$, denoted by $\mathcal{L}_{a,b}$. This is a set with ${a+b}\choose{b}$ elements. The cyclic group $\mathbb{Z}/{(a+b)\mathbb{Z}}$ acts on this set by the so-called cycling relation of a path. The orbits contains $a+b$ elements and exactly one rational Dyck path. 
\newline\indent The sets $\mathcal{L}_{a,b}$ and $\mathrm{Dyck}_{a,b}$ can be naturally viewed as posets. A lattice path $l_1$ is smaller than another $l_2$ if $l_1$ lies below $l_2$. The formula for the cardinality of $\mathrm{Dyck}_{a,b}$ suggests a relation between the poset $A_{a+b}\times \mathrm{Dyck}_{a,b}$ and the poset $\mathcal{L}_{a,b}$. It is easy to see that these two posets are not isomorphic. Still, we think that they may share the same derived category. 
\begin{conj}\label{conj_dyck}
Let $a,b$ be two co-prime integers. Let $\mathcal{L}_{a,b}$ be the poset of lattice paths in the rectangle $a\times b$ and $\mathrm{Dyck}_{a,b}$ be the poset of $(a,b)$-rational Dyck paths. Then, the poset $A_{a+b} \times \mathrm{Dyck_{a,b}}$ is derived equivalent to the poset $\mathcal{L}_{a,b}$. 
\end{conj}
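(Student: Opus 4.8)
This statement is a \emph{conjecture}, so rather than a proof I can only propose a line of attack; the natural one is to imitate the argument of Corollary \ref{triangle}, which is the only place in the paper where a ``product versus intervals'' equivalence is actually established, and to try to reduce the conjecture to Theorem \ref{main_thm}. Recall the shape of that argument: the ``triangle'' $\mathrm{Int}(\overrightarrow{A_{2n}})$ was realised as a poset $\Gamma$, the ``rectangle'' $A_{2n+1}\times A_n$ was identified with the functor category $\mathcal{F}_{\Gamma_0,k}$ through the Auslander algebra of $\overrightarrow{A_{2n}}$ and Ladkani's rectangle theorem, and Theorem \ref{main_thm} supplied the bridge $D^b(\mathcal{F}_{\Gamma_0,k})\cong D^b(\FGk)$. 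I would set up the same scheme with $\mathcal{L}_{a,b}$ playing the role of the ``triangle'' $\Gamma$ and $A_{a+b}\times\mathrm{Dyck}_{a,b}$ the role of the ``rectangle'' $\Gamma_0$. The first, encouraging, check is numerical: both sides have $\binom{a+b}{a}$ elements, so nothing obstructs a derived equivalence at the level of the Coxeter polynomial.

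The monotone lattice paths in the $a\times b$ rectangle, ordered so that $l_1\leqslant l_2$ when $l_1$ lies below $l_2$, form the distributive lattice $\mathcal{J}(\overrightarrow{A_a}\times\overrightarrow{A_b})$ of order ideals of a product of two chains (the ideal being the region under the path), and indeed $|\mathcal{J}(\overrightarrow{A_a}\times\overrightarrow{A_b})|=\binom{a+b}{a}=|\mathcal{L}_{a,b}|$. This is exactly the kind of subposet of a product that the construction preceding Theorem \ref{main_thm} produces, so I would search for finite posets $X,Y$ and an order-preserving map $F\colon Y\to\mathcal{J}(X)$, built from the two chains $\overrightarrow{A_a}$ and $\overrightarrow{A_b}$, for which $\Gamma\cong\mathcal{L}_{a,b}$. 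Once such data are found the zero-relations of $k\Gamma_0$ are forced, and the equivalence $D^b(\FGk)\cong D^b(\mathcal{F}_{\Gamma_0,k})$ is immediate from the main theorem; all the content then migrates to a single remaining claim.

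That claim is that $\mathcal{F}_{\Gamma_0,k}$ is derived equivalent to the representation category of the product poset $A_{a+b}\times\mathrm{Dyck}_{a,b}$, and here the \emph{Cycle Lemma} must do the decisive work: the action of $\mathbb{Z}/(a+b)\mathbb{Z}$ on $\mathcal{L}_{a,b}$ has orbits of size $a+b$, each meeting $\mathrm{Dyck}_{a,b}$ in exactly one path, which is precisely the numerical signature of a factorisation into the chain $A_{a+b}$ and the poset $\mathrm{Dyck}_{a,b}$. The plan would be to promote this set-theoretic decomposition to an explicit tilting complex on $\mathcal{F}_{\Gamma_0,k}$ whose endomorphism algebra is the incidence algebra of $A_{a+b}\times\mathrm{Dyck}_{a,b}$, with the cyclic symmetry ``unrolled'' into the linear order on $A_{a+b}$ --- playing the role that the almost-split sequences of the Auslander algebra played in Corollary \ref{triangle}.

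This last step is also where I expect the real difficulty. In the rectangle case the bridge came for free from the fully understood Auslander algebra of a linear $A_m$; for rational Dyck paths no such ready-made interpretation exists, the Hasse diagram of $\mathrm{Dyck}_{a,b}$ is genuinely intricate, and the cyclic action is not order-preserving, so its ``unrolling'' is a derived-categorical rather than a combinatorial operation. Constructing the tilting complex --- and even guessing the correct triple $(X,Y,F)$ --- is the crux; a reasonable first experiment is to settle the smallest nontrivial case $(a,b)=(2,3)$, where each side has ten elements, before attempting the general construction.
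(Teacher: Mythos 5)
This statement is a \emph{conjecture}: the paper contains no proof of it in general, only of the special case $a=2$ (the proposition immediately following it), so your decision to offer a plan of attack rather than a proof is the right reading of the situation. Moreover, your plan is essentially the paper's own strategy as carried out in that special case: there the authors identify $\mathcal{L}_{2,b}$ with the interval poset $\mathrm{Int}(A_{b+1})$ --- which is the poset $\Gamma$ attached to the data $X=Y=A_{b+1}$, $F(x)=[\cdot,x]$ --- and identify $\mathrm{Dyck}_{2,b}$ with the chain $A_{(b+1)/2}$, so that the conjecture for $a=2$ becomes exactly Corollary \ref{triangle}; and that corollary is proved along precisely the lines you describe, with Ladkani's rectangle theorem supplying the bridge between the product $A_{2n+1}\times A_n$ and $\mathcal{F}_{\Gamma_0,k}$ (realised as modules over the Auslander algebra of $k\overrightarrow{A_{2n}}$, i.e.\ over $\mathrm{Int}^0_k(\overrightarrow{A_{2n}})$), and Theorem \ref{main_thm} supplying the bridge between $\mathcal{F}_{\Gamma_0,k}$ and $\mathcal{F}_{\Gamma,k}$. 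You also locate the open difficulty in the right place: for $a\geqslant 3$ there is no known analogue of Ladkani's theorem, i.e.\ no identification of $D^b$ of the product $A_{a+b}\times\mathrm{Dyck}_{a,b}$ with $D^b(\mathcal{F}_{\Gamma_0,k})$ for a suitable choice of data, and this step --- not the main theorem --- carries all the content.

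Three cautions on the details of your plan. First, your numerical check proves less than you state: equal cardinality only forces the Grothendieck groups to have the same rank; it says nothing about the Coxeter polynomials, which could in principle still obstruct a derived equivalence, so computing and comparing them in small open cases would itself be a worthwhile test. Second, realising $\mathcal{L}_{a,b}$ as some $\Gamma$ is by itself vacuous --- any poset $P$ is $\Gamma$ for $X=P$, $Y$ a point, $F(\mathrm{pt})=P$, in which case $k\Gamma_0=k\Gamma$ and the main theorem says nothing --- so the data $(X,Y,F)$ and the identification of $\mathcal{F}_{\Gamma_0,k}$ with the product poset cannot be sought independently; they must be designed together, exactly as in the $a=2$ case where both come out of the Auslander--Reiten theory of $k\overrightarrow{A_m}$. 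Third, your proposed first experiment $(a,b)=(2,3)$ is not a test of the general method at all: it falls under the paper's proposition (it is the case $n=1$ of Corollary \ref{triangle}). The smallest genuinely open case is $(a,b)=(3,4)$, with $\binom{7}{3}=35$ elements on each side, and that is where any proposed tilting complex built from the cycle lemma should first be tried.
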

In the particular case where $a=2$, the tools developed here together with results of the second author can be used in order to check this conjecture. 
\begin{prop}
Let $k$ be an algebraically closed field. Let $b$ be an \emph{odd} integer. Then, there is a derived equivalence over the field $k$ between the posets $A_{b+2} \times \mathrm{Dyck}_{2,b}$ and $\mathcal{L}_{2,b}$. 
\end{prop}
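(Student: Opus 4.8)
The plan is to reduce the statement entirely to Corollary~\ref{triangle} by identifying all three posets appearing here with the posets of that corollary. Throughout, set $n=(b+1)/2$, which is a positive integer precisely because $b$ is odd (this is also exactly the coprimality hypothesis $\gcd(2,b)=1$); then $b+2=2n+1$ and $b+1=2n$. The goal is to establish the two poset isomorphisms
\[ A_{b+2}\times \mathrm{Dyck}_{2,b}\ \cong\ A_{2n+1}\times A_n \qquad\text{and}\qquad \mathcal{L}_{2,b}\ \cong\ \mathrm{Int}(\overrightarrow{A_{2n}}), \]
after which Corollary~\ref{triangle} gives a derived equivalence between the two right-hand sides and hence between $A_{b+2}\times \mathrm{Dyck}_{2,b}$ and $\mathcal{L}_{2,b}$.

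First I would analyse $\mathrm{Dyck}_{2,b}$. A path in the $2$-by-$b$ rectangle is recorded by the $x$-coordinates $0\leqslant i\leqslant j\leqslant b$ of its two north steps. The condition of lying above and never crossing the diagonal forces the first north step to occur at $x=0$ and constrains the second to $x=j$ with $0\leqslant j\leqslant (b-1)/2$; thus there are exactly $(b+1)/2$ rational Dyck paths, matching $\frac{1}{b+2}\binom{b+2}{2}$. Moreover any two such paths are comparable for the ``lies below'' order (a smaller $j$ yields a uniformly higher path), so $\mathrm{Dyck}_{2,b}$ is a chain, i.e. $\mathrm{Dyck}_{2,b}\cong A_{(b+1)/2}=A_n$. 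Since both factors are chains, $A_{b+2}\times \mathrm{Dyck}_{2,b}\cong A_{2n+1}\times A_n$, the rectangle poset of Corollary~\ref{triangle}.

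Next I would treat $\mathcal{L}_{2,b}$, again encoding a path by its north-step positions $(i,j)$ with $0\leqslant i\leqslant j\leqslant b$. Here the ``lies below'' order translates into the \emph{reverse} of the product order on the pairs $(i,j)$, whereas the poset of intervals $\mathrm{Int}(\overrightarrow{A_{b+1}})=\{[p,q]:1\leqslant p\leqslant q\leqslant b+1\}$ carries the product order $[p,q]\leqslant[p',q']\Leftrightarrow p\leqslant p',\,q\leqslant q'$. I would reconcile the two via the explicit relabelling $(i,j)\mapsto[\,b+1-j,\ b+1-i\,]$, a bijection sending the reverse product order to the product order, which gives $\mathcal{L}_{2,b}\cong\mathrm{Int}(\overrightarrow{A_{b+1}})=\mathrm{Int}(\overrightarrow{A_{2n}})$. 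Conceptually this is just the self-duality of the interval poset of a chain (the interval construction commutes with duality, as noted after the first definition, and a chain is self-dual), but the cleanest write-up simply exhibits the map.

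With these two isomorphisms in hand, Corollary~\ref{triangle} applies to $A_{2n+1}\times A_n$ and $\mathrm{Int}(\overrightarrow{A_{2n}})$ and transports through the isomorphisms to yield the desired derived equivalence over $k$. The only genuinely delicate point---the main obstacle---is the bookkeeping of order conventions: one must correctly see that the ``lies below'' order makes $\mathrm{Dyck}_{2,b}$ a chain and turns $\mathcal{L}_{2,b}$ into the order-dual of the pair-poset, and then undo that duality compatibly with the self-duality of $\mathrm{Int}(\overrightarrow{A_{2n}})$. Once these combinatorial identifications are pinned down, no further homological work is needed beyond invoking Corollary~\ref{triangle}.
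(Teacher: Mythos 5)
Your proposal is correct and follows essentially the same route as the paper: both identify $\mathrm{Dyck}_{2,b}$ with the chain $A_{(b+1)/2}$ and encode a lattice path by the abscissas of its two north steps to identify $\mathcal{L}_{2,b}$ with $\mathrm{Int}(\overrightarrow{A_{b+1}})=\mathrm{Int}(\overrightarrow{A_{2n}})$, then invoke Corollary~\ref{triangle}. The only cosmetic difference is how the order reversal is handled (your explicit relabelling $(i,j)\mapsto[b+1-j,\,b+1-i]$ versus the paper's device of ordering $A_{b+1}$ decreasingly), which amounts to the same identification.
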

\begin{proof}
If $\lambda$ is a lattice path in the rectangle $2\times b$, we denote by $I(\lambda)$ the pair $(j,i)$ where $i$ is the abscissa of the first vertical move of the path and $j$ is the abscissa of the second vertical move. The path is characterized by the pair $I(\lambda)$. There are $\frac{b+1}{2}$ different $(2,b)$-Dyck paths that corresponds to the pairs $(0,0), (1,0),\cdots ,(\frac{b-1}{2},0)$. Moreover, the partial order of the paths is given by $(\frac{b-1}{2},0) < \cdots < (1,0) < (0,0)$. In other words, $\mathrm{Dyck}_{2,b} \cong A_{\frac{b+1}{2}}$.
\newline\indent If $(j,i)$ is the pair $I(\lambda)$ of a lattice path, it is clear that $i\leqslant j$. In particular $I(\lambda)$ can be seen as an interval of $A_{b+1}$ ordered by decreasing order. If $\lambda_1$ and $\lambda_2$ are two paths, it is easy to see that $\lambda_1 \leqslant \lambda_2$ if and only if $I(\lambda_1)\leqslant I(\lambda_2)$ in the poset of intervals. This shows that $\mathcal{L}_{2,b}$ is isomorphic to the poset of intervals of $A_{b+1}$. The result follows from Corollary \ref{triangle}. 
\end{proof}
\section{Representations of a finite poset}
Let $k$ be commutative ring. Let $Y$ be a finite poset. The category $\FYk$ is abelian. The abelian structure is point-wise. More precisely, it is defined on the evaluations of the functors. For $y\in Y$, there is an obvious functor, denoted by $\ev_y$ from $\FYk$ to $k\Mod$ that sends a functor $F$ to its value $F(y)$. This functor is clearly exact. 
\newline\indent For $y \in Y$, we let $\mathrm{P}_y:= \Hom_{k\mathcal{C}_Y}(y,-)$. By Yoneda's Lemma, we have
\[ \Hom_{\FYk}\Big(\Hom_{k\mathcal{C}_Y}\big(y,-\big),-\Big) \cong \mathrm{ev}_y.\] In particular, the functor $\mathrm{P}_y$ is projective. Similarly, the functor $\mathrm{I}_y:=\Hom_{k\C_Y}(-,y)^{*}$ is an injective functor. More precisely, the evaluations of these functors are
\[ \mathrm{P}_y(z) = \left\{\begin{array}{c}k \hbox{ \ \ if $y \leqslant z$, }  \\0\hbox{ \ otherwise,}\end{array}\right. \]
\[ \mathrm{I}_y(z) = \left\{\begin{array}{c}k \hbox{ \ \ if $z \leqslant y$, }  \\0\hbox{ \ otherwise.}\end{array}\right. \]
\begin{lemma}\label{hom_proj}
Let $Y$ be a finite poset and $k$ be a commutative ring. Let $x$ and $y\in Y$. Then,
\[ \Hom_{\FYk}\big(\mathrm{P}_x, \mathrm{P}_y\big) = \Hom_{\FYk}\big(\mathrm{I}_x, \mathrm{I}_y\big)  = \left\{\begin{array}{c}k \hbox{ if $y \leqslant x$, } \\0 \hbox{ otherwise.}\end{array}\right.\]

\end{lemma}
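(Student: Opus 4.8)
The plan is to treat the two cases separately, reducing each to the value of a representable functor. The projective case is immediate from the Yoneda isomorphism stated just above the lemma, and the injective case is handled dually; in both cases the point is that the relevant functors take only the values $k$ and $0$, with all nonzero transition maps equal to the identity.

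For the projective functors, I would apply the Yoneda isomorphism $\Hom_{\FYk}(\mathrm{P}_x,-)\cong\ev_x$ to the functor $\mathrm{P}_y$. This gives $\Hom_{\FYk}(\mathrm{P}_x,\mathrm{P}_y)\cong\mathrm{P}_y(x)=\Hom_{k\C_Y}(y,x)$, and the evaluation formula for $\mathrm{P}_y$ recalled above yields $\mathrm{P}_y(x)=k$ when $y\leqslant x$ and $0$ otherwise, which is exactly the claim.

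For the injective functors I would argue directly with natural transformations, which also keeps the proof valid over an arbitrary commutative ring. The essential observation is that, on the locus where its values are nonzero, every transition map $\mathrm{I}_y(\alpha)$ attached to a morphism $\alpha\colon z\to w$ with $w\leqslant y$ is the identity of $k$, and likewise for $\mathrm{I}_x$. Given a natural transformation $\phi\colon\mathrm{I}_x\to\mathrm{I}_y$, suppose some component $\phi_z$ is nonzero; then necessarily $z\leqslant x$ and $z\leqslant y$. Applying naturality to the unique morphism $z\to y$ and using that $\mathrm{I}_y(z\to y)$ is the identity shows $\phi_y\circ\mathrm{I}_x(z\to y)=\phi_z\neq0$, which forces $\mathrm{I}_x(y)\neq0$, i.e. $y\leqslant x$. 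Conversely, when $y\leqslant x$ the common support of the two functors is exactly the down-set of $y$, all transition maps there are identities, so a natural transformation is pinned down by a single free scalar $\phi_y\in k$ with every other nonzero component equal to it; this exhibits $\Hom_{\FYk}(\mathrm{I}_x,\mathrm{I}_y)\cong k$.

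I expect the only delicate point to be organizing the injective case so that it works for a general commutative ring rather than a field. A slicker alternative would be to apply the pointwise $k$-dual $(-)^{*}=\Hom_k(-,k)$, which sends $\mathrm{I}_y=\Hom_{k\C_Y}(-,y)^{*}$ to the representable $\Hom_{k\C_Y}(-,y)$, that is, to the projective at $y$ inside $\mathcal{F}_{Y^{\mathrm{op}},k}$, and then to invoke the projective case over $Y^{\mathrm{op}}$ together with $x\leqslant_{Y^{\mathrm{op}}}y\Leftrightarrow y\leqslant_{Y}x$. The subtlety there is that $(-)^{*}$ is a genuine duality only on functors with finitely generated free values; since every functor in sight takes values $k$ or $0$, the canonical map to the double dual is an isomorphism and this shortcut is legitimate. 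Either route yields the stated equality.
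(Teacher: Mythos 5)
Your proposal is correct. The paper's own proof is a single sentence — ``these are straightforward applications of Yoneda's Lemma'' — so for the projective case your argument ($\Hom_{\FYk}(\mathrm{P}_x,-)\cong\ev_x$ applied to $\mathrm{P}_y$, then read off $\mathrm{P}_y(x)$) is exactly what the paper has in mind. For the injective case the paper's appeal to Yoneda must implicitly pass through the pointwise duality that you describe as the ``slicker alternative'': $(-)^{*}$ sends $\mathrm{I}_y=\Hom_{k\C_Y}(-,y)^{*}$ back to the representable presheaf $\Hom_{k\C_Y}(-,y)$, it is fully faithful on functors whose values are finitely generated free $k$-modules (all values here are $k$ or $0$), and Yoneda for presheaves then gives $\Hom\big(\Hom_{k\C_Y}(-,y),\Hom_{k\C_Y}(-,x)\big)\cong\Hom_{k\C_Y}(y,x)$, which is $k$ precisely when $y\leqslant x$. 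Your primary route for the injectives — the direct analysis of natural transformations, showing that a nonzero component forces $y\leqslant x$ and that conversely all components on the down-set of $y$ are tied to the single scalar $\phi_y$ — is a genuinely more elementary alternative: it mirrors the paper's earlier explicit computation of $\Hom_{\mathcal{F}_{X,k}}(M_{a,b},M_{c,d})$, works verbatim over an arbitrary commutative ring, and avoids having to justify that dualization induces isomorphisms on Hom sets, at the cost of a longer case check. Both routes are valid, and you correctly identify and dispose of the one subtlety (double-dual bijectivity) in the duality shortcut.
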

\begin{proof}
These are straightforward applications of Yoneda's Lemma. 
\end{proof}
Let $X$ and $Y$ be two finite posets and $F : Y\to \mathcal{J}(X)$ be an order preserving map. For $y\in Y$, we let $i_{y} : F(Y) \to \Gamma$ be the map that sends $x\in F(y)$ to $(x,y)\in \Gamma$. This is an order preserving map, so the pre-composition by $i_{y}$ gives a functor $i_{y}^{-1} : \FGk \to \Fyk$. More explicitly, if $\phi \in \FGk$, then $i_{y}^{-1}(\phi)$ is the functor that sends $x\in F(y)$ to the $k$-module $\phi(x,y)$. The functor $i_{y}^{-1}$ is clearly exact and by usual arguments it has a left and a right adjoint which can be described as particular coend and end (for more details see Theorem $1$, Section $4$ of Chapter $X$ of \cite{cftwm}). One can explicitly compute this end in order to find the right adjoint. Alternatively, and for the convenience of the reader, we give the formula and check that this gives indeed a right adjoint of $i_{y}^{-1}$.
\newline\indent Let $\phi \in \Fyk$. Then, for $(a,b)\in \Gamma$ we set:
\[ (i_{y})_{\star}\phi(a,b):=\left\{\begin{array}{c}\Hom_{k}\Big(\Hom_{k\Gamma}\big((a,b),(a,y)\big), \phi(a)\Big) \hbox{ if $b \leqslant y$, } \\0 \hbox{ otherwise. }\end{array}\right.  \]
Since $\Hom_{k\Gamma}\big((a,b),(a,y)\big)$ is isomorphic to $k$ when $b\leqslant y$, this formula can be simplified as
\[ (i_{y})_{\star}\phi(a,b)\cong \left\{\begin{array}{c}\phi(a) \hbox{ if $b \leqslant y$, } \\0 \hbox{ otherwise. }\end{array}\right.  \]
However, we feel that it is more natural to describe this functor in this way.
\newline\indent Let $0\neq f : (a,b) \to (c,d)$ be a morphism in $k\Gamma$ such that $b\leqslant d \leqslant y$. Let $0\neq g \in \Hom_{k\Gamma}\big((c,d),(c,y) \big)$. Then, there exist $h\in \Hom_{k\Gamma}\big((a,b),(a,y)\big)$ and $\alpha \in \Hom_{kF(y)}(a,c)$ such that the following diagram commutes
\[
\xymatrix{
(a,b)\ar[r]^{h}\ar[d]^{f} & (a,y)\ar[d]^{i_{y}(\alpha)} \\
(c,d) \ar[r]^{g} & (c,y)
}
\]
Note that $h$ and $\alpha$ are not unique. However, the different choices are of the form $\lambda \times h$ and $\lambda^{-1}\times \alpha$ for $\lambda \in k^{\times}$.
\newline\indent Then $(i_{y})_{\star}(\phi)(f)$ is the application that sends $\rho\in \Hom_{k}\Big(\Hom_{k\Gamma}\big((a,b),(a,y)\big), \phi(a)\Big)$ to the $k$-linear morphism that sends $g\in \Hom_{k\Gamma}\big((c,d),(c,y)\big)$ to $\phi(\alpha) \circ \rho(h)\in \phi(c)$. Since $\phi$ and $\rho$ are $k$-linear morphisms, we see that the value of $(i_{y})_{\star}(\phi)(f)$ does not depend on the choice of $\alpha$ and $h$. 
\newline\indent Let $\eta : \phi \Rightarrow \psi$ be a morphism between two functors of $\Fyk$. Then $(i_{y})_{\star}(\eta)$ is the natural transformation defined by $(i_{y})_{\star}(\eta)_{(a,b)}\big(\rho) = \eta_{a} \circ \rho$ for $(a,b)\in \Gamma$ such that $b\leqslant y$ and $\rho \in \phi(a,b)$.
\begin{lemma}
Let $y\in Y$. Then, the functor $(i_{y})^{-1} : \FGk\to \Fyk$ is a left adjoint to the functor $(i_y)_{\star} : \Fyk \to \FGk$. 
\end{lemma}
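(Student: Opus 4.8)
The plan is to exhibit a bijection
\[ \Hom_{\FGk}\big(\psi, (i_y)_\star\phi\big) \;\cong\; \Hom_{\Fyk}\big(i_y^{-1}\psi, \phi\big), \]
natural in $\psi \in \FGk$ and $\phi \in \Fyk$, which is exactly the assertion that $i_y^{-1}$ is left adjoint to $(i_y)_\star$. Conceptually $(i_y)_\star$ is the pointwise right Kan extension along the order-preserving inclusion $i_y$, so the existence of the adjunction is automatic; but since the formula has been written out explicitly, I would simply verify the defining universal property by hand. The starting observation is that $(i_y)_\star\phi$ vanishes on every pair $(a,b)$ with $b\not\leqslant y$, so any natural transformation $\psi\Rightarrow (i_y)_\star\phi$ is forced to be zero there and is determined by its components at the objects $(a,b)$ with $b\leqslant y$. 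Moreover at the objects $(a,y)$ the relation $\Hom_{k\Gamma}\big((a,y),(a,y)\big)\cong k$ gives a canonical identification $(i_y)_\star\phi(a,y)\cong \phi(a)$, and this is where the data will concentrate.

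Next I would write down the two maps explicitly. Given $\sigma : i_y^{-1}\psi \Rightarrow \phi$, define $\theta=\Phi(\sigma)$ by $\theta_{(a,b)}=0$ when $b\not\leqslant y$ and, when $b\leqslant y$, by letting $\theta_{(a,b)}(v)$ be the $k$-linear map sending $h\in \Hom_{k\Gamma}\big((a,b),(a,y)\big)$ to $\sigma_a\big(\psi(h)(v)\big)\in\phi(a)$. Conversely, given $\theta : \psi \Rightarrow (i_y)_\star\phi$, define $\sigma=\Psi(\theta)$ by $\sigma_a(w)=\theta_{(a,y)}(w)\big(\mathrm{id}_{(a,y)}\big)$ using the identification above. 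That $\Phi(\sigma)$ and $\Psi(\theta)$ are well-defined on each object is immediate from the $k$-linearity of $\psi$ and of the components of $\sigma$ and $\theta$.

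The substance of the proof is then three verifications: that $\Phi(\sigma)$ is natural, that $\Psi(\theta)$ is natural, and that $\Phi$ and $\Psi$ are mutually inverse; naturality of the whole bijection in $\phi$ and $\psi$ is routine and I would leave it to the reader. I expect the one genuinely delicate point to be the naturality of $\Phi(\sigma)$, since its naturality square for a morphism $f:(a,b)\to(c,d)$ is precisely where the slightly involved description of $(i_y)_\star\phi(f)$ intervenes. After discarding the trivial cases (both sides vanish unless $d\leqslant y$), one evaluates the two composites at a morphism $g:(c,d)\to(c,y)$ and uses the commuting square $i_y(\alpha)\circ h=g\circ f$ that defines $(i_y)_\star\phi(f)$; the required identity collapses to $\sigma_c\big(\psi(g\circ f)(v)\big)=\phi(\alpha)\big(\sigma_a(\psi(h)(v))\big)$, which is exactly the naturality of $\sigma$ for $\alpha:a\to c$ applied to the element $\psi(h)(v)$. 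The remaining checks are parallel but easier: naturality of $\Psi(\theta)$ follows by applying naturality of $\theta$ to a morphism of the form $i_y(\alpha)$ and evaluating at an identity, while $\Psi\Phi=\mathrm{id}$ and $\Phi\Psi=\mathrm{id}$ follow by applying naturality of $\theta$ to the morphism $h$ and evaluating at $\mathrm{id}_{(a,y)}$, repeatedly using $\psi(\mathrm{id})=\mathrm{id}$ and $\phi(\mathrm{id})=\mathrm{id}$. Thus the only step demanding real care is the first, precisely because it is the place where the non-obvious action of $(i_y)_\star$ on morphisms is used.
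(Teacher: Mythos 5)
Your proposal is correct and is essentially the paper's argument in a different but standard packaging: the paper exhibits the counit $\rho\mapsto\rho(\mathrm{Id}_{(x,y)})$ and unit $\gamma\mapsto\big(\alpha\mapsto G(\alpha)(\gamma)\big)$ of the adjunction, and your maps $\Phi$ and $\Psi$ are precisely the hom-set transposition maps these two natural transformations induce, so the key computations (evaluation at the identity, $h\mapsto\sigma_a(\psi(h)(v))$, and the use of the defining square $i_y(\alpha)\circ h=g\circ f$) coincide. If anything, you verify more of the routine details than the paper, which declares the triangle identities ``straightforward to check.''
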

\begin{proof}
We give the unit and the co-unit of the adjunction. 
\newline\indent Let $F\in \Fyk$ and $x\in F(y)$. The unit at $F$ and $x$, is the $k$-linear morphism 
\[\epsilon_F(x) : \Hom_{k}\Big(\Hom_{k\Gamma}\big((x,y),(x,y)\big),F(x)\Big)  \to F(x),\]
that sends $\alpha \in \Hom_{k}\Big(\Hom_{k\Gamma}\big((x,y),(x,y)\big),F(x)\Big)$ to $\alpha(Id_{(x,y)}) \in F(x)$. 
\newline\indent Let $G\in \FGk$ and $(a,b)\in\Gamma$ such that $b\leqslant y$. The co-unit of the adjunction at $G$ and $(a,b)$ is the $k$-linear morphism
\[\eta_{G}(a,b) : G(a,b) \to \Hom_{k}\Big(\Hom_{k\Gamma}\big((a,b),(a,y)\big),G(a,y)\Big) \]
that sends $\gamma \in G(a,b)$ to the $k$-linear morphism that sends $\alpha \in \Hom_{k\Gamma}\big((a,b),(a,y)\big)$ to $G(\alpha)(\gamma)$. It is now straightforward to check that these two morphisms are the unit and the co-unit of the adjunction. 
\end{proof}
Here, we summarise the main properties of the functors $i_{y}^{-1}$ and $(i_y)_{\star}$.
\begin{lemma}\label{adj}
Let $y\in Y$.
\begin{enumerate}
\item The functor $(i_y)_{\star}$ sends the injective $I_{x} \in \Fyk$ to the injective $I_{(x,y)} \in \FGk$.
\item The two functors $i_y^{-1}$ and $(i_y)_{\star}$ are exact. 
\end{enumerate}
\end{lemma}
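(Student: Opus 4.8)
The plan is to reduce both assertions to the \emph{pointwise} nature of the abelian structure on these functor categories, recorded at the beginning of this section: a sequence in $\Fyk$ (resp. in $\FGk$) is exact if and only if it stays exact after applying every evaluation functor $\ev_{x}$ (resp. $\ev_{(a,b)}$), and each such evaluation is itself exact. Both parts of the lemma then come down to recognizing the relevant composites of our two functors as (shifts of) evaluation functors.

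For the exactness statement (2) I would argue separately for the two functors. The functor $i_y^{-1}$ is precomposition with the order-preserving map $i_y$, so for $x\in F(y)$ one has $\ev_x\circ i_y^{-1}=\ev_{(x,y)}$ as functors $\FGk\to k\Mod$; since the family $(\ev_x)_{x\in F(y)}$ jointly detects exactness and each $\ev_{(x,y)}$ is exact, $i_y^{-1}$ is exact. For $(i_y)_{\star}$ I would use the simplified objectwise description $(i_y)_{\star}\phi(a,b)\cong\phi(a)$ when $b\leqslant y$ and $0$ otherwise, noting that $b\leqslant y$ forces $a\in F(b)\subseteq F(y)$ so that $\phi(a)$ is defined. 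This shows $\ev_{(a,b)}\circ(i_y)_{\star}$ is naturally isomorphic to $\ev_a$ when $b\leqslant y$ and to the zero functor otherwise; both are exact, hence $(i_y)_{\star}$ is exact. The point worth stressing is that $(i_y)_{\star}$, being a right adjoint, is automatically left exact, so the only real content is the preservation of epimorphisms, which the pointwise description supplies.

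For assertion (1) I would compute $(i_y)_{\star}I_x$ directly. On objects the simplified formula gives $(i_y)_{\star}I_x(a,b)\cong I_x(a)$ for $b\leqslant y$ and $0$ otherwise; since $I_x(a)=k$ exactly when $a\leqslant x$, we get $(i_y)_{\star}I_x(a,b)=k$ precisely when $a\leqslant x$ and $b\leqslant y$, that is, when $(a,b)\leqslant(x,y)$ in $\Gamma$. This is exactly the support of the indecomposable injective $I_{(x,y)}\in\FGk$. It then remains to match transition morphisms: a morphism $(a,b)\to(c,d)$ is sent to the transition map of $I_x$ along $a\leqslant c$, which is $\mathrm{Id}_k$ when $c\leqslant x$ and zero otherwise; combined with the condition $d\leqslant y$ needed for a nonzero target, this is the identity exactly when $(c,d)\leqslant(x,y)$, which is precisely the structure map of $I_{(x,y)}$. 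Hence $(i_y)_{\star}I_x\cong I_{(x,y)}$.

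The only genuinely delicate point is this last identification of structure maps, because the honest description of $(i_y)_{\star}$ on morphisms involves the non-unique factorization through $(a,y)$ and $(c,y)$ and the hom-space $\Hom_{k\Gamma}\big((a,b),(a,y)\big)$ rather than the naive value $\phi(a)$. The task is to check that the canonical identification $\Hom_k\big(\Hom_{k\Gamma}((a,b),(a,y)),\phi(a)\big)\cong\phi(a)$, coming from the freeness of rank one of that hom-space, is compatible with the transition maps; once that is in place the computation above is literal. As a sanity check, and as an alternative argument that $(i_y)_{\star}I_x$ is injective independent of the explicit formula, I would note that $i_y^{-1}$ is exact and left adjoint to $(i_y)_{\star}$, so $(i_y)_{\star}$ preserves injectives; together with the objectwise computation this already pins the result down, since $I_{(x,y)}$ is the unique indecomposable injective of $\FGk$ supported on the down-set of $(x,y)$.
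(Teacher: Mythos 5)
Your proof is correct and follows essentially the same route as the paper: exactness of $(i_y)_{\star}$ is checked pointwise via the rank-one identification $\Hom_{k\Gamma}\big((a,b),(a,y)\big)\cong k$, and $(i_y)_{\star}I_x$ is identified with $I_{(x,y)}$ by computing its values, with the fact that a right adjoint of an exact functor preserves injectives pinning down the isomorphism. Your additional care in matching the structure maps of $(i_y)_{\star}I_x$ against those of $I_{(x,y)}$ is a harmless refinement of what the paper does more tersely (it relies on injectivity plus the value computation instead), but the ingredients are the same.
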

\begin{proof}
Since $(i_y)_{\star}$ is a right-adjoint to an exact functor, it sends $I_{x}\in \Fyk$ to an injective object of $\FGk$. Moreover, one can explicitly compute $(i_y)_{\star}(I_{x})$. Let $(a,b)\in \Gamma$, then we have
\[(i_y)_{\star}(I_x)(a,b) = \left\{\begin{array}{c} I_{x}(a) \hbox{ if $b\leqslant y$} \\0 \hbox{ otherwise } \end{array}\right. = \left\{\begin{array}{c} k \hbox{ if $a\leqslant x$ and $b\leqslant y$} \\0 \hbox{ otherwise. } \end{array}\right. \]
It is clear that $i_y^{-1}$ is an exact functor. For the functor $(i_{y})_{*}$ the exactness follows easily from the description of this adjoint. Let 
\[\xymatrix{
0 \ar[r] & F_1 \ar[r]^{\alpha_1} & F_2 \ar[r]^{\alpha_2} & F_3 \ar[r] &0,
} \]
be an exact sequence of functors of $\Fyk$. Let $(a,b)\in \Gamma$. If $b \nleqslant y$, then for $i=1,2,3$ we have $(i_{y})_{\star}(F_i)(a,b) = 0$  and $(i_{y})_{\star}(\alpha_i)_{(a,b)}=0$ for $i=1,2$ so the sequence is exact. If $b\leqslant y$, then $(i_{y})_{\star}(\alpha)_{(a,b)} = \Hom_{k}\Big(\Hom_{k\Gamma}\big((a,b),(a,y)\big),(\alpha_i)_{a}\Big)$ for $i=1,2$. Since $\Hom_{k\Gamma}\big((a,b),(a,y)\big)\cong k$, the result follows. 
\end{proof}
Since the functors $(i_y)_{\star}$ and $i_y^{-1}$ are both exact, they can be extended as a pair of adjoint triangulated functors between the derived categories $D^{b}\big(\Fyk\big)$ and $D^b\big(\FGk\big)$.
\newline\indent Let us remark that, in general, the functor $(i_{y})_{\star}$ does not send a projective functor to a projective functor. However, it behaves relatively nicely for these projective functors.
\begin{lemma}
Let $y$ and $y'\in Y$. Let $x\in F(y)$ and $P_x$ be the corresponding projective functor when it is followed by a restriction. Then,

\[ \big(i_{y'}^{-1} \circ (i_{y})_{\star}\big)(P_x)\cong \left\{\begin{array}{c}P_x \in \mathcal{F}_{F(y'),k} \hbox{ if $ y'\leqslant y$ and $x\in F(y')$, } \\0 \hbox{ otherwise. }\end{array}\right.  \]
\end{lemma}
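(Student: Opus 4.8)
The plan is to prove this by directly composing the two explicit descriptions established above: the simplified evaluation formula $(i_{y})_{\star}\phi(a,b)\cong\phi(a)$ for $b\leqslant y$ (and $0$ otherwise), together with the fact that $i_{y'}^{-1}$ is nothing but precomposition with the order-preserving map $i_{y'}\colon c\mapsto(c,y')$. Since both functors are understood pointwise, the computation reduces to evaluating on the objects of $F(y')$, and the only genuinely conceptual ingredient beyond bookkeeping is the closedness of the ideal $F(y')$.

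First I would compute $(i_{y})_{\star}(P_x)$ on objects. Recall that $P_x\in\Fyk$ has $P_x(a)=k$ if $x\leqslant a$ and $0$ otherwise. Applying the simplified formula, and noting that $(a,b)\in\Gamma$ together with $b\leqslant y$ forces $a\in F(b)\subseteq F(y)$ (so that $P_x(a)$ is even defined), I obtain
\[
(i_{y})_{\star}(P_x)(a,b)\cong\left\{\begin{array}{c} k \hbox{ if $x\leqslant a$ and $b\leqslant y$,}\\ 0 \hbox{ otherwise.}\end{array}\right.
\]
Next I would apply $i_{y'}^{-1}$, that is, restrict to the objects of the form $(c,y')$ with $c\in F(y')$. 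This yields that $\big(i_{y'}^{-1}\circ(i_{y})_{\star}\big)(P_x)(c)\cong k$ precisely when $x\leqslant c$ and $y'\leqslant y$, and is $0$ otherwise.

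The case analysis then finishes the argument. If $y'\not\leqslant y$ the functor vanishes identically. If $y'\leqslant y$ the functor sends $c$ to $k$ exactly when $x\leqslant c$; when $x\in F(y')$ this is by definition the functor $P_x\in\mathcal{F}_{F(y'),k}$, whereas when $x\notin F(y')$ I would invoke that $F(y')$ is closed: any $c\in F(y')$ with $x\leqslant c$ would place $x$ in $[\cdot,c]\subseteq F(y')$, a contradiction, so the functor is $0$. Finally, to upgrade these equalities of evaluations to a genuine isomorphism of functors I would verify agreement on morphisms as well; because every relevant Hom-space in $k\Gamma$ and in $kF(y')$ is either $k$ or $0$ with composition given by scalar multiplication, the structure maps are forced to be the identity on the nonzero pieces and zero elsewhere, so tracking the explicit action of $(i_{y})_{\star}$ on a morphism and then precomposing reproduces the action of $P_x$.

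The main obstacle here is very mild: it is not the arithmetic but rather the careful use of the ideal (closedness) property of $F(y')$ to dispose of the subcase $y'\leqslant y$ with $x\notin F(y')$, together with checking the morphism action and not merely the object values. Everything else is a routine unwinding of the two pointwise formulas.
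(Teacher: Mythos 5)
Your proposal is correct and follows essentially the same route as the paper: evaluate $(i_{y})_{\star}(P_x)$ pointwise via the simplified formula, restrict along $i_{y'}$, and use the closedness of $F(y')$ to handle the case $x\notin F(y')$. The only difference is that you additionally spell out the (routine) check on morphisms, which the paper leaves implicit.
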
 
\begin{proof}
Let $x'\in F(y')$. Then, we have
\begin{align*}
 \big(i_{y'}^{-1} \circ (i_{y})_{\star}\big)(P_x)(x') &=(i_{y})_{\star}(P_x)(x',y') \\
 &= \left\{\begin{array}{c}P_x(x') \hbox{ if $ y'\leqslant y$, } \\0 \hbox{ otherwise. }\end{array}\right. \\
 &= \left\{\begin{array}{c}k \hbox{ if $ y'\leqslant y$, and $x \leqslant x'$ } \\0 \hbox{ otherwise. }\end{array}\right.
\end{align*}
Since $F(y')$ is closed, the condition $x\leqslant x'$ implies that $x\in F(y')$. The result follows. 
\end{proof}
\section{Proofs of Theorem \ref{main_thm}}
First let us recall the famous Morita theorem for derived categories of Rickard.
\begin{theo}
Let $A$ and $B$ be two rings. Then, the following are equivalent
\begin{enumerate}
\item $D^b(A\Mod) \cong D^b(B\Mod)$.
\item $B$ is isomorphic to $\End_{D^b(A)}(T)^{op}$ where $T$ is an object of $K^{b}(\mathrm{proj}(A))$ satisfying
\begin{itemize}
\item $\Hom_{D^{b}(A)}(T,T[i]) = 0 $ for $i\neq 0$,
\item $\mathrm{add}(T)$, the category of direct summands of finite direct sums of copies of $T$, generates $K^b(\mathrm{proj}(A))$ as triangulated category.
\end{itemize}
\end{enumerate}
\end{theo}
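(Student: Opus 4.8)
\medskip

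The plan is to treat the two implications separately, since they are of entirely different depth. The implication $(1)\Rightarrow(2)$ is a transport-of-structure argument. Given a triangulated equivalence $\Phi\colon D^b(B\Mod)\xrightarrow{\sim}D^b(A\Mod)$, I would set $T:=\Phi(B)$, the image of the stalk complex $B$ concentrated in degree $0$. Since $B$ is a perfect complex lying in $K^b(\mathrm{proj}(B))$ and $\Phi$ preserves perfectness, $T$ lands in $K^b(\mathrm{proj}(A))$. The groups $\Hom_{D^b(A)}(T,T[i])\cong\Hom_{D^b(B)}(B,B[i])$ vanish for $i\neq0$ because $B$ is a projective module placed in degree $0$, and $\mathrm{add}(T)$ generates $K^b(\mathrm{proj}(A))$ because $B$ generates $K^b(\mathrm{proj}(B))$ and $\Phi$ is a triangulated equivalence. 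Finally $\End_{D^b(A)}(T)^{op}\cong\End_{D^b(B)}(B)^{op}\cong B$, so $T$ is a tilting complex with the prescribed endomorphism ring.

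The real content is $(2)\Rightarrow(1)$: from a tilting complex $T$ with $B\cong\End_{D^b(A)}(T)^{op}$ I must manufacture an equivalence. First, by the definition of $B$ together with the rigidity of $T$, the additive category $\mathrm{add}(T)$ is equivalent to $\mathrm{proj}(B)$ via $T\mapsto B$, and this upgrades to an equivalence $K^b(\mathrm{add}(T))\simeq K^b(\mathrm{proj}(B))$ of homotopy categories of complexes. Next I would consider the \emph{realization functor} $\Psi\colon K^b(\mathrm{add}(T))\to K^b(\mathrm{proj}(A))$ that sends a bounded complex whose terms lie in $\mathrm{add}(T)$ to its total complex. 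The crux of the whole theorem is that $\Psi$ is fully faithful. Here \emph{rigidity} enters twice: it guarantees that a complex over $\mathrm{add}(T)$, whose differentials are a priori only morphisms in the triangulated category, can be rectified to a genuine complex (the obstructions live in the vanishing groups $\Hom_{D^b(A)}(T,T[i])$ with $i\neq0$); and it forces the comparison spectral sequence computing $\Hom_{K^b(\mathrm{proj}(A))}(\mathrm{Tot}\,X,\mathrm{Tot}\,Y[n])$, whose input is assembled from the graded pieces $\Hom_{D^b(A)}(T^p,T^q[r])$, to collapse onto the single row $r=0$, which reproduces exactly the Hom-complex computing $\Hom_{K^b(\mathrm{proj}(B))}(X,Y[n])$.

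Granting full faithfulness, $\Psi$ identifies $K^b(\mathrm{proj}(B))$ with a thick triangulated subcategory of $K^b(\mathrm{proj}(A))$ containing $T$; by the generation hypothesis this subcategory is all of $K^b(\mathrm{proj}(A))$, so $\Psi$ is an equivalence $K^b(\mathrm{proj}(B))\simeq K^b(\mathrm{proj}(A))$ of perfect complexes. It remains to extend this to an equivalence of the bounded derived categories $D^b(A\Mod)\simeq D^b(B\Mod)$. The cleanest route I know is to observe that, since $A\in K^b(\mathrm{proj}(A))$ lies in the thick closure of $T$, the complex $T$ is in fact a compact generator of the whole derived category $D(A\Mod)$; its derived endomorphism dg algebra has cohomology concentrated in degree $0$ equal to $B$, hence is quasi-isomorphic to $B$, and the dg-Morita theorem then yields $D(A\Mod)\simeq D(B\Mod)$, restricting to the bounded subcategories by compactness and the finiteness of $T$. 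Alternatively one extends $\Psi$ first to the bounded-above homotopy categories $K^-(\mathrm{proj})$ by writing each such complex as a homotopy limit of its brutal truncations, obtaining $D^-(A\Mod)\simeq D^-(B\Mod)$, and then cuts down to bounded cohomology.

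I expect the single hard point to be the full faithfulness of the realization functor $\Psi$ in the second step: this is precisely where the two defining properties of a tilting complex --- self-orthogonality and the prescribed endomorphism ring --- are converted into information about morphism spaces of total complexes, and it is the step for which the rigidity hypothesis $\Hom_{D^b(A)}(T,T[i])=0$ for $i\neq0$ is indispensable. The extension from perfect complexes to $D^b$ in the third step, while requiring care about finiteness hypotheses on $A$ and $B$ and about control of unbounded resolutions, is formal once the core equivalence of perfect complexes is in hand.
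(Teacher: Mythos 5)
The first thing to say is that the paper does not prove this statement: it is Rickard's Morita theorem for derived categories, and the paper's ``proof'' consists of a citation to Theorem $6.4$ of Rickard's paper and Theorem $6.5.1$ of Zimmermann's book (the latter following Keller's approach). So there is no argument in the paper to measure you against; the relevant comparison is with the cited literature. Against that yardstick, your sketch reproduces the correct architecture of both known proofs: the direction $(1)\Rightarrow(2)$ by transporting the stalk complex $B$ across the equivalence, and the direction $(2)\Rightarrow(1)$ either via Rickard's route (extend $\mathrm{add}(T)\simeq \mathrm{proj}(B)$ to complexes, prove the realization functor fully faithful, conclude by generation) or via Keller's route ($T$ is a compact generator of the unbounded derived category whose derived endomorphism dg algebra is formal by rigidity, then dg Morita theory).

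However, as a proof rather than a roadmap, the proposal has genuine gaps, and they sit exactly where you say they do. First, your functor $\Psi$ is not even well defined before serious work is done: an object of $K^b(\mathrm{add}(T))$ has differentials that are morphisms in $K^b(\mathrm{proj}(A))$, i.e.\ homotopy classes of chain maps, and ``$d^2=0$'' holds only in the homotopy category; hence ``its total complex'' does not exist until one rectifies the complex over $\mathrm{add}(T)$ into an honest bicomplex. The obstructions to this lifting are killed by $\Hom_{D^b(A)}(T,T[i])=0$ for $i<0$, and the subsequent induction (or spectral sequence) giving full faithfulness uses the vanishing for $i>0$; this lifting-plus-faithfulness argument \emph{is} the theorem, and you name it but do not carry it out. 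Second, in $(1)\Rightarrow(2)$ you assert that $\Phi$ ``preserves perfectness''; this is true but not free, since an equivalence of bounded derived categories has no a priori reason to respect a subcategory defined via projective resolutions --- one needs the intrinsic characterization of perfect complexes inside $D^b$ (for instance, $X$ is perfect if and only if for every $Y\in D^b(A\Mod)$ one has $\Hom(X,Y[n])=0$ for $n\gg 0$), which is then manifestly preserved. Third, the final descent from $D(A\Mod)\simeq D(B\Mod)$ (or from the equivalence of categories of perfect complexes) to the bounded derived categories again needs a cohomological characterization of boundedness, which you gesture at with ``by compactness'' but do not spell out. In short: your outline is faithful to the sources the paper cites and correctly isolates where the rigidity hypothesis enters, but the theorem's entire difficulty is concentrated in the steps you explicitly defer, so what you have is an accurate plan of the proof rather than a proof.
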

\begin{proof}
See Theorem $6.4$ of \cite{rickard_morita} or Theorem $6.5.1$ of \cite{zimmermann_rep} for a proof following Keller's approach. Note that the proof of Keller is stronger. It shows that it is possible to realise the derived equivalence as the tensor product with a bounded complex of bimodules. However, it holds for two $k$-algebras over a commutative ring that are projective over $k$.   
\end{proof}
The complex $T$ is called a tilting complex for the ring $A$. In the present paper, we work with categories of functors. However, it is easy to see that all our functors categories are equivalent to categories of modules over an algebra. Moreover, these algebras are free over the ring $k$ and of finite rank. In particular, in order to build derived equivalences, we can use Rickard's Morita theorem. Since the algebras are free over $k$, the stronger form of this theorem due to Keller also holds in our context. 
\newline\indent Let $y\in Y$ and $x\in Y$. We denote by $P_{x}$ the projective functor of $\Fyk$ that corresponds to the element $x$.
\begin{prop}\label{tilting}
Let $k$ be a commutative ring. Let $X$ and $Y$ be two finite posets and let $F : Y \to \mathcal{J}(X)$ be an order preserving map. Then, the complex 
\[\mathrm{T} := \bigoplus_{y\in Y} \bigoplus_{x\in F(y)} (i_{y})_{\star} (P_{x}) \]
is a tilting complex for $D^{b}\big(\mathcal{F}_{\Gamma,k}\big)$. 
\end{prop}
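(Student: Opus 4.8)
The plan is to verify the conditions of Rickard's theorem directly inside $D^{b}(\FGk)$. Since $\FGk$ is the module category over the incidence algebra of the finite poset $\Gamma$, it has finite global dimension, so the canonical functor $K^{b}(\mathrm{proj}(\FGk)) \to D^{b}(\FGk)$ is a triangle equivalence. Consequently each summand $M_{x,y} := (i_{y})_{\star}(P_{x})$, a priori only an object of $\FGk$, is automatically identified with a bounded complex of projectives, and it suffices to check that $\Hom_{D^{b}(\FGk)}(\mathrm{T}, \mathrm{T}[i]) = 0$ for $i \neq 0$ and that the objects $M_{x,y}$ generate $D^{b}(\FGk)$ as a triangulated category closed under direct summands.

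For the first condition I would use the derived adjunction. By Lemma \ref{adj} the functors $i_{y'}^{-1}$ and $(i_{y'})_{\star}$ are exact, hence they descend to an adjoint pair of triangulated functors between the bounded derived categories, and adjunction yields a natural isomorphism
\[ \Hom_{D^{b}(\FGk)}\big(M_{x,y}, M_{x',y'}[i]\big) \cong \Hom_{D^{b}(\mathcal{F}_{F(y'),k})}\big(i_{y'}^{-1}(i_{y})_{\star}(P_{x}), P_{x'}[i]\big). \]
The explicit computation of $i_{y'}^{-1} \circ (i_{y})_{\star}$ on the projective $P_{x}$ (the last lemma of the preceding section) shows that $i_{y'}^{-1}(i_{y})_{\star}(P_{x})$ equals $P_{x}$ when $y' \leqslant y$ and $x \in F(y')$, and is zero otherwise; in either case it is a projective object concentrated in degree $0$. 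Since $P_{x'}$ is projective in $\mathcal{F}_{F(y'),k}$, the group $\Hom_{D^{b}(\mathcal{F}_{F(y'),k})}(P_{x}, P_{x'}[i])$ vanishes for $i \neq 0$, which is exactly what is required.

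For the generating condition I would route through the injective functors rather than attempt to build the indecomposable projectives of $\FGk$ directly. Let $\mathcal{T}$ be the thick subcategory of $D^{b}(\FGk)$ generated by $\mathrm{add}(\mathrm{T})$, that is, by all the $M_{x',y}$. For a fixed $y$, the category $\Fyk$ is the representation category of the finite poset $F(y)$, so its projectives $\{P_{x'}\}_{x'\in F(y)}$ generate $D^{b}(\Fyk)$; in particular every injective $I_{x}$ lies in the thick subcategory they generate. Applying the triangulated functor $(i_{y})_{\star}$, which sends $P_{x'}$ to $M_{x',y}$ and, by Lemma \ref{adj}, sends $I_{x}$ to the injective $I_{(x,y)}$ of $\FGk$, carries this thick subcategory into $\mathcal{T}$; hence $I_{(x,y)} \in \mathcal{T}$ for every $(x,y) \in \Gamma$. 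Since $\FGk$ has finite global dimension, the indecomposable injectives $\{I_{(x,y)}\}_{(x,y)\in\Gamma}$ generate $D^{b}(\FGk)$, so $\mathcal{T} = D^{b}(\FGk)$ and $\mathrm{T}$ generates.

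I expect the generation step to be the main obstacle, and the key idea is precisely to exploit the compatibility of $(i_{y})_{\star}$ with injectives from Lemma \ref{adj}, together with its exactness, instead of trying to resolve the (non-projective) objects $M_{x,y}$ into the projectives $P_{(x,y)}$ of $\FGk$ directly. The Hom-vanishing, by contrast, is essentially forced once one has the derived adjunction and the explicit description of $i_{y'}^{-1}\circ(i_{y})_{\star}$ on projectives.
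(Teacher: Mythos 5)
Your proposal is correct and follows essentially the same route as the paper's proof: the Hom-vanishing is obtained via the derived adjunction $(i_{y'}^{-1},(i_{y'})_{\star})$ combined with the explicit computation of $i_{y'}^{-1}\circ(i_{y})_{\star}$ on the projectives $P_x$, and generation is obtained by exploiting exactness of $(i_{y})_{\star}$ together with $(i_{y})_{\star}(I_x)\cong I_{(x,y)}$ to place every injective of $\FGk$ in the thick subcategory generated by $\mathrm{add}(\mathrm{T})$, then invoking finite global dimension to conclude. The only difference is stylistic: the paper manipulates explicit finite projective resolutions and quasi-isomorphisms where you phrase the same argument in terms of thick subcategories and the triangulated extension of $(i_{y})_{\star}$.
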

\begin{proof}
Strictly speaking, the complex $\mathrm{T}$ is not a tilting complex since its terms are not projective. However, since the category $\mathcal{F}_{\Gamma,k}$ has finite global dimension, we can find a bounded complex of projective objects which is quasi-isomorphic to $\mathrm{T}$. 
\begin{enumerate}
\item Let $y \in Y$. Let $x\in F(y)$ and $I_{x}$ be a finitely injective functor of $\Fyk$ corresponding to $x$. It has a finite projective resolution, so there is a bounded complex $P_{\bullet} $ of elements of $add(\bigoplus_{x \in F(y)} P_x)$ and a quasi-isomorphism $\phi : P_{\bullet} \to I$. Since the functor $(i_{y})_{\star}$ is exact, we have a quasi-isomorphism between $(i_{y})_{\star}(P_{\bullet})$ and $(i_y)_{\star}(I)$. By Lemma \ref{adj}, we have $(i_y)_{\star}(I_{x}) \cong I_{(x,y)}$.  So for every $(x,y)\in \Gamma$, the injective functor $I_{(x,y)}$ belongs to the smallest triangulated subcategory of $D^{b}(\FGk)$ that contains $\mathrm{add(T)}$. Since every finitely projective functor has a finite injective co-resolution, we conclude that this category is equivalent to $K^{b}\big(\mathrm{proj}(\FGk)\big)$. 
\item  Let $i\in \mathbb{Z}$. Then, using the fact that $(i_{y'})_{\star}$ is a triangulated functor and the adjunction, we have
\begin{align*}
\Hom_{D^b(\FGk)}\big(\mathrm{T},\mathrm{T}[i]\big) &= \bigoplus_{y,y'\in Y} \Hom_{D^b(\FGk)}\Big((i_{y})_{\star}\big(\bigoplus_{x\in F(y)}P_{x}\big),(i_{y'})_{\star}\big(\bigoplus_{x'\in F(y')}P_{x'}\big)[i]\Big)\\
&\cong \bigoplus_{y,y'\in Y} \Hom_{D^b(\mathcal{F}_{F(y'),k})}\Big(i_{y'}^{-1}\circ (i_{y})_{\star}\big( \bigoplus_{x\in F(y)}P_{x}\big),\bigoplus_{x'\in F(y')}P_{x'}[i]\Big) \\
& \cong \bigoplus_{y'\leqslant y} \Hom_{D^b(\mathcal{F}_{F(y'),k})}\Big(\bigoplus_{x\in F(y')}P_{x},\bigoplus_{x'\in F(y')}P_{x'}[i]\Big)
\end{align*}
Since $P_x$ and $P_x'$ are projective objects in $\mathcal{F}_{F(y'),k}$, there are no non-trivial extensions between them. This implies that $\Hom_{D^b(\FGk)}\big(\mathrm{T},\mathrm{T}[i]\big) = 0$ if $i\neq 0$. 
\end{enumerate}
\end{proof} 
\begin{proof}[Proof of Theorem \ref{main_thm}]
By Proposition \ref{tilting}, there is an equivalence between $D^b(\FGk)$ and $D^b(\End(T)^{op})$, where $T$ is the tilting complex of the Proposition. By usual Morita theory, the category $\mathcal{F}_{\Gamma_0,k}$ is equivalent to $\End_{\mathcal{F}_{\Gamma_{0},k}}\big(\bigoplus_{(x,y)\in k\Gamma_0} P_{(x,y)}\big)^{op}$ where $P_{(x,y)}$ is the representable functor $\Hom_{k\Gamma_0}\big((x,y),-\big)$. Using the Yoneda Lemma, we have
\[ \Hom_{\mathcal{F}_{\Gamma_{0},k}}\big(P_{(x,y)},P_{(x',y')}\big) = \left\{\begin{array}{c} k \hbox{ if $x'\leqslant x$, $y'\leqslant y$ and $x\in F(y')$, } \\0 \hbox{ otherwise.}\end{array}\right. \]
Since $(i_{y})_{\star}(P_x)$ and $(i_{y'})_{\star}(P_{x'})$ for $x\in F(y)$ and $x\in F(y')$ and $y,y'\in Y$, are two functors, we can do the computation of $\End(\mathrm{T})$ in the categories of functors instead of the derived category. Then, we have
\begin{align*}
\Hom_{\FGk}\big( (i_{y})_{\star}(P_x), (i_{y'})_{\star}(P_{x'})\big) & \cong \Hom_{\mathcal{F}_{F(y'),k}}\big(i_{y'}^{-1} \circ (i_{y})_{\star}(P_x), P_{x'}\big)\\
& \cong  \left\{\begin{array}{c} \Hom_{\mathcal{F}_{F(y'),k}}(P_{x},P_{x'}) \hbox{ if $y'\leqslant y$ and $x\in F(y')$} \\0 \hbox{ otherwise,}\end{array}\right.\\
& \cong \left\{\begin{array}{c} k \hbox{ if $x'\leqslant x$, $y'\leqslant y$ and $x\in F(y')$, } \\0 \hbox{ otherwise.}\end{array}\right.
\end{align*}
This implies that $End(T) \cong \End_{\mathcal{F}_{\Gamma_{0},k}}\big(\bigoplus_{(x,y)\in k\Gamma_0} P_{(x,y)}\big)$. Taking the `op' functor, we have the derived equivalence between $\FGk$ and $\mathcal{F}_{\Gamma_0,k}$. 
\end{proof}

\bibliographystyle{alpha}

{Fr\'ed\'eric Chapoton} \\
{Institut de Recherche Math\'ematique Avanc\'ee, CNRS UMR 7501, Universit\'e de Strasbourg, F-67084 Strasbourg Cedex, France} \\
{chapoton@unistra.fr}\\
{Sefi Ladkani}\\
{Department of Mathematics, University of Haifa, Mount Carmel,31905 Haifa, Israel}\\
{ladkani.math@gmail.com}\\
{Baptiste Rognerud} \\
{Fakultät für Mathematik Universität Bielefeld D-33501 Bielefeld, Germany} \\
{brognerud@math.uni-bielefeld.de}
\end{document}